\def\tsL{\widetilde \sL}
\title[Quadratic Fields Generated by the Shanks Sequence]
{On the Number of Distinct  Quadratic Fields\\ Generated by the Shanks Sequence}
\author[W.\ D.\ Banks]{William D.\ Banks}
\address{Department of Mathematics, 
         University of Missouri, 
         Columbia, MO 65211, USA.}
\email{bankswd@missouri.edu}
\author[I.\ E.\ Shparlinski]{Igor E.\ Shparlinski}
\address{Department of Pure Mathematics,
		 University of New South Wales,
		 Sydney, NSW 2052, Australia.}
\email{igor.shparlinski@unsw.edu.au}
\date{\today}
\begin{document}

\begin{abstract}
Let $g>1$ be an integer and $f(X)\in{\mathbb Z}[X]$ a polynomial 
of positive degree with no multiple roots, and put $u(n)=f(g^n)$.
In this note, we study the sequence of quadratic fields
${\mathbb Q}\(\sqrt{u(n)}\,\)$
as $n$ varies over the consecutive integers
$M+1,\ldots,M+N$.  Fields of this
type include Shanks fields and their generalizations.
Using the square sieve together with new bounds on character
sums, we improve an upper bound of Luca and Shparlinski (2009)
on the number of $n \in \{M+1,\ldots,M+N\}$ with ${\mathbb Q}\(\sqrt{u(n)}\,\) = {\mathbb Q}\(\sqrt{s}\,\)$ for a given squarefree integer $s$. 
\end{abstract}

\subjclass[2010]{11D45, 11L40, 11N36, 11R11}
\keywords{Quadratic fields  square sieve, character sums.}

\maketitle

\section{Introduction}

Let $f(X)\in\ZZ[X]$ be a polynomial of degree $d\ge 1$ that has
a positive leading coefficient and no multiple roots
in its splitting field.  Let $g>1$ be a fixed integer, and put
\begin{equation}
\label{eq:undefn}
u(n)=f(g^n)\qquad(n\ge 1).
\end{equation}
Note that $u(n)>0$ for all but finitely many $n$.
In this paper, we study those quadratic fields
$\QQ\(\sqrt{u(n)}\,\)$ that arise 
as $n$ varies over a sequence of consecutive integers.

Motivated by work of Shanks~\cite{Shan}, which
corresponds to the particular case in which $u(n)=(2^n+3)^2-8$
and certain generalizations (see~\cite{PatvdPWill,vdPWill,Will}), 
Luca and Shparlinski~\cite{LuSh2} have studied the distribution 
of the quadratic fields $\QQ\(\sqrt{u(n)}\,\)$ in the
general setting of~\eqref{eq:undefn}.  To describe the results,
let $Q_u(M,N;s)$ be the number of integers $n\in[M+1,M+N]$
for which $\QQ\(\sqrt{u(n)}\,\)=\QQ(\sqrt{s}\,)$,
where $M,N,s\in\ZZ$ with $N\ge 1$ and $s$ squarefree.
Using the square sieve of Heath-Brown~\cite{HB1}
along with some knowledge about prime divisors of shifted primes,
in~\cite[Theorem~1.3]{LuSh2} a nontrivial upper bound on 
$Q_u(M,N;s)$ is given.

More precisely, let  $\tau_\ell$ denote
the multiplicative order of $g$ modulo a prime $\ell$, that is,
the smallest positive integer $\tau$ for which $g^\tau\equiv 1\bmod \ell$.
Let $\alpha_0$ be a fixed real number for which
one has a lower bound of the form 
\begin{equation}
\label{eq:good alpha0}
\left|\left\{\ell \le z:\ell~\text{is prime and~}
\tau_\ell \ge \ell^{\alpha_0}\right\}\right| \gg \frac{z}{\log z}
\end{equation}
for all sufficiently large $z$, where the implied
constant depends only on $\alpha_0$ (see \S\ref{sec:gen not}
for the definitions of $\gg$, $\ll$ and other related symbols).
We also let $\alpha$ a fixed real number for which
one has a lower bound of the form 
\begin{equation}
\label{eq:good alpha}
\left|\left\{\ell \le z:\ell~\text{is prime and~}
P^+(\ell-1)\ge \ell^\alpha\right\}\right| \gg \frac{z}{\log z}
\end{equation}
for all sufficiently large $z$, where $P^+(k)$ denotes the largest
prime divisor of an integer $k\ge 2$, and the implied constant
depends only on $\alpha$ 
Using a result of Baker and Harman~\cite{BaHa} one can take
\begin{equation}
\label{eq:BH bound} 
\alpha_0 \ge \alpha =0.677,
\end{equation}
and under the Extended Riemann Hypothesis (ERH) one can take any
real number $\alpha_0<1$; see~\cite{ErdMur,Papp}.  It is straightforward 
to show that if $\alpha>1/2$ is admissible for~\eqref{eq:good alpha} then
$\alpha_0=\alpha$ is also admissible for~\eqref{eq:good alpha0}; in fact,
this is the only known approach for getting large values of $\alpha_0$
unconditionally. However, under the ERH 
every value  $\alpha_0<1$ is admissible for~\eqref{eq:good alpha0}
in a very strong sense (see Lemma~\ref{lem:Mult Ord ERH}
below), but the values of $\alpha$ are not improved under the ERH.

In the above notation, under natural conditions as in our 
Theorem~\ref{thm:QsMN} below, in~\cite[Theorem~1.3]{LuSh2}
it is shown that the bound
\begin{equation}
\label{eq:QuMNsest}
Q_u(M,N;s) \ll N^{\beta_0}(\log N)^{\gamma_0}
\end{equation}
holds uniformly for all choices of $M,N,s$ as above, where
$$
\beta_0  = \frac{3}{2(1+\alpha_0)}\mand
\gamma_0=\frac{4+\alpha_0}{1+\alpha_0}.
$$
(in~\cite{LuSh2} the results are formulated in terms 
of $\alpha$, however the argument only depends on the parameter $\alpha_0$).
Thus, using~\eqref{eq:BH bound} we see that~\cite[Theorem~1.3]{LuSh2} yields~\eqref{eq:QuMNsest}
unconditionally with
\begin{equation}
\label{eq:BH} 
\beta_0  = 0.89445 \cdots
\end{equation}
and conditionally (under ERH) with
\begin{equation}
\label{eq:ERH} 
\beta_0= 0.75+\eps
\end{equation}
for any fixed $\eps>0$. 

\begin{theorem}
\label{thm:QsMN}
Let $g>1$ be an integer, and let
$f(X)\in\ZZ[X]$ be a polynomial of degree $\deg f\ge 3$
with a positive leading coefficient and no multiple roots
in its splitting field.  Put $u(n)=f(g^n)$.
If $\alpha$ satisfies \eqref{eq:good alpha} then 
uniformly for $M,N,s\in\ZZ$ with $N\ge 1$ and $s$ squarefree,
the upper bound
$$
Q_u(M,N;s) \ll  N^\beta(\log N)^\gamma
$$
holds with
$$
\beta=(2\alpha)^{-1}\mand\gamma=2-\alpha^{-1},
$$
where the implied constant depends only on $f$, $g$ and $\alpha$. 
\end{theorem}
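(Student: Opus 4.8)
The plan is to run the square sieve of Heath--Brown on the family $\{\,|s\,u(n)|:M<n\le M+N\,\}$, following the general strategy of Luca--Shparlinski but exploiting two improvements: a divisibility observation that lets the sieving modulus be much smaller than $N$, and a sharper lower bound for the multiplicative order of $g$ modulo products of the sieving primes. We may assume $N$ exceeds any fixed constant and that $Q:=Q_u(M,N;s)\ge 1$ (otherwise there is nothing to prove); in the latter case $|s|$ is the squarefree part of some $u(n)\le f(g^{M+N})$, so $\log|s|\ll N$. Write $d=\deg f$, and for a prime $\ell$ let $\tau_\ell$ be the order of $g$ modulo $\ell$. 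Fix a parameter $z$, to be optimized at the end as a small power of $N$ (it will turn out that $z$ should have size $N^{1/(2\alpha)}$ up to logarithmic factors, which is genuinely below $N$ precisely because $\alpha>1/2$). I let $\mathcal P$ consist of the primes $\ell\in(z,2z]$ with: (a) $q_\ell:=P^+(\ell-1)\ge\ell^\alpha$; (b) $q_\ell\mid\tau_\ell$; and (c) $\ell$ divides none of $2g$, the leading coefficient of $f$, or the discriminant of $f$. By \eqref{eq:good alpha} there are $\gg z/\log z$ primes satisfying (a). Among these, a prime failing (b) has $\ell\mid g^m-1$ with $m=(\ell-1)/q_\ell\le 2z^{1-\alpha}$, and since $\prod_{m\le 2z^{1-\alpha}}(g^m-1)$ has $O_g\bigl(\sum_{m\le 2z^{1-\alpha}}m\bigr)=O_g(z^{2(1-\alpha)})=o(z/\log z)$ prime factors (here $\alpha>1/2$ is essential), condition (b) removes only $o(z/\log z)$ primes, and (c) removes $O_{f,g}(1)$ more. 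Hence $P:=|\mathcal P|\asymp z/\log z$, and every $\ell\in\mathcal P$ has $\tau_\ell\ge q_\ell\ge z^\alpha$.

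The crucial point that allows $z\ll N$ is that a congruence $\ell\mid u(n)=f(g^n)$ forces $g^n$ to be one of the at most $d$ roots of $f$ modulo $\ell$, each attained by at most $\lfloor N/\tau_\ell\rfloor+1$ integers $n$ in the interval; thus $\#\{n: \ell\mid u(n)\}\le d(N/z^\alpha+1)$ for $\ell\in\mathcal P$. Next, if $u(n)=sk^2$ is a nonzero square times $s$ then $s\,u(n)=(sk)^2$, so $\sum_{\ell\in\mathcal P}\bigl(\tfrac{s\,u(n)}{\ell}\bigr)=P-\#\{\ell\in\mathcal P:\ell\mid u(n)\}$. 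Squaring, summing over such $n$, and using the bound just obtained together with $\sum_n\#\{\ell:\ell\mid u(n)\}=\sum_{\ell\in\mathcal P}\#\{n:\ell\mid u(n)\}\le dP(N/z^\alpha+1)$, one finds that either $Q\ll N/z^\alpha+1$ (which is of the required size for the final choice of $z$), or
$$
Q\ \ll\ \frac{N}{P}\ +\ \frac{1}{P^{2}}\sum_{\substack{\ell,m\in\mathcal P\\ \ell\ne m}}\ \Bigl|\sum_{M<n\le M+N}\Bigl(\frac{f(g^{n})}{\ell m}\Bigr)\Bigr|,
$$
the diagonal terms $\ell=m$ contributing $O(PN)$, absorbed into $N/P$.

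It remains to estimate the bilinear character sum. For $\ell\ne m$ let $t=\operatorname{lcm}(\tau_\ell,\tau_m)$, the order of $g$ modulo $\ell m$. Since $n\mapsto\bigl(\tfrac{f(g^n)}{\ell m}\bigr)$ is $t$-periodic, splitting the interval into complete periods and completing the leftover sum modulo $t$ gives
$$
\Bigl|\sum_{M<n\le M+N}\Bigl(\frac{f(g^{n})}{\ell m}\Bigr)\Bigr|\ \ll\ \Bigl(\frac{N}{t}+\log(\ell m)\Bigr)\,\max_{\psi}\,\Bigl|\sum_{x\in\langle g\rangle}\Bigl(\frac{f(x)}{\ell m}\Bigr)\psi(x)\Bigr|,
$$
where $\psi$ ranges over characters of the cyclic group $\langle g\rangle\le(\ZZ/\ell m\ZZ)^{\times}$. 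Expanding the indicator of $\langle g\rangle$ as the average of the $\phi(\ell m)/t$ multiplicative characters of $(\ZZ/\ell m\ZZ)^{\times}$ trivial on $\langle g\rangle$, each inner sum becomes a combination of that many complete mixed character sums over $(\ZZ/\ell m\ZZ)^{\times}$; factoring each through the Chinese Remainder Theorem and applying the Weil bound modulo $\ell$ and modulo $m$ separately—this is where $f$ squarefree of degree $d\ge 3$ is used, to ensure that no multiplicative twist of $\bigl(\tfrac{f(\cdot)}{\ell}\bigr)$ or $\bigl(\tfrac{f(\cdot)}{m}\bigr)$ is principal—one gets $O_d(\sqrt{\ell m})$ for each, hence $\max_\psi|\cdots|\ll_d\sqrt{\ell m}$. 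This yields the uniform estimate $\ll\sqrt{\ell m}\,(N/t+\log(\ell m))\ll z\,(N/t+\log z)$.

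Finally I split the pairs $(\ell,m)$ according to whether $q_\ell\ne q_m$ (``good'') or $q_\ell=q_m$ (``bad''). For a good pair, $q_\ell$ and $q_m$ are distinct primes dividing $\tau_\ell$ and $\tau_m$, so $q_\ell q_m\mid t$ and $t\ge z^{2\alpha}$; the character sum is then $\ll z(Nz^{-2\alpha}+\log z)=Nz^{1-2\alpha}+z\log z$. Bad pairs are sparse: a fixed prime $q$ can equal $q_\ell$ for at most $\lfloor z/q\rfloor+1\le 2z^{1-\alpha}$ primes $\ell\in(z,2z]$, so there are $\ll z^{1-\alpha}P\ll z^{2-\alpha}/\log z=o(P^2)$ of them, and on these I use the trivial bound $\le N$. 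Collecting terms,
$$
Q\ \ll\ \frac{N\log z}{z}\ +\ Nz^{1-2\alpha}\ +\ z\log z\ +\ \frac{N\log z}{z^{\alpha}},
$$
and since $\alpha>1/2$ every exponent of $z$ other than in the first term is advantageous; taking $z$ of size $N^{1/(2\alpha)}$ times a suitable power of $\log N$ balances the two governing terms $Nz^{1-2\alpha}$ and $z\log z$ and, after optimizing the logarithmic factors, gives $Q\ll N^{1/(2\alpha)}(\log N)^{2-\alpha^{-1}}$. The main obstacle is the bilinear character sum: one needs the full square-root saving for \emph{every} twisted complete sum that arises (forcing $\deg f\ge 3$ and squarefreeness), and one needs $t\ge z^{2\alpha}$ for almost all pairs—this is exactly what replaces the order exponent $\alpha_0$ by the larger-prime-factor exponent $\alpha$, and it is the reason the hypothesis $\alpha>1/2$ is indispensable.
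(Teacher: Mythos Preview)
Your argument is correct and follows the same overall architecture as the paper: square sieve over a set of primes $\ell\in(z,2z]$ with $q_\ell:=P^+(\ell-1)\ge\ell^\alpha$ and $q_\ell\mid\tau_\ell$, then a good/bad splitting of the off-diagonal pairs according to whether $q_\ell\ne q_m$, with the final choice $z\asymp N^{1/(2\alpha)}$.

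The genuine difference is in how you bound the incomplete sum $\sum_{M<n\le M+N}(\tfrac{f(g^n)}{\ell m})$. The paper passes to $\lambda=g^{h}$ with $h=\gcd(\tau_\ell(g),\tau_m(g))$ so that $\gcd(\tau_\ell(\lambda),\tau_m(\lambda))=1$, and then invokes Korobov's factorisation (their Lemma~4.1) to write the complete sum as a product of two single-prime sums, each handled by the Weil bound; this produces the estimate $\ll N\sqrt{\ell m}/\tau_{\ell m}(\lambda)+h\sqrt{\ell m}\log(\ell m)$, and the extra factor $h\le\gcd(\ell-1,m-1)$ is then averaged via the sum $T=\sum\gcd(\ell-1,m-1)\ll z^2/\log z$. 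You instead complete directly modulo $t=\operatorname{lcm}(\tau_\ell,\tau_m)$, expand the indicator of the subgroup $\langle g\rangle\subset(\ZZ/\ell m\ZZ)^\times$ by orthogonality, and factor each resulting full sum through the CRT before applying Weil componentwise; the $\phi(\ell m)/t$ characters and the normalising factor $t/\phi(\ell m)$ cancel exactly, giving $\ll\sqrt{\ell m}(N/t+\log(\ell m))$ with no $h$. This is a cleaner route and in fact shaves a logarithm: your governing terms are $Nz^{1-2\alpha}$ and $z\log z$, whereas the paper obtains $Nz^{1-2\alpha}$ and $z(\log z)^2$, so your optimisation actually yields $\gamma=1-1/(2\alpha)$, which is smaller than the stated $2-1/\alpha$ (hence your claim is a fortiori valid). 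A second, minor difference is your treatment of the primes $\ell\mid u(n)$: rather than separating an exceptional set $\sE_z$ as in the paper, you use the direct count $\#\{n:\ell\mid u(n)\}\le d(N/\tau_\ell+1)$ and a dichotomy; both devices lead to the same $Nz^{-\alpha}$ term.

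One small remark: your appeal to ``$\deg f\ge 3$'' to guarantee that no multiplicative twist of $(\tfrac{f(\cdot)}{\ell})$ is principal is more than is needed. Since $f\bmod\ell$ is squarefree (you excluded $\ell\mid\operatorname{disc} f$), the function $x\mapsto\chi_0(f(x))$ has a simple ramification at each root of $f$, so $\chi_0(f(x))\chi'(x)$ is never principal for \emph{any} multiplicative $\chi'$ once $d\ge 2$, and even for $d=1$ provided $\ell\nmid f(0)$; this matches the hypothesis in the paper's Lemma~4.2.
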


Consequently, in place of~\eqref{eq:BH} we get the value
$$
\beta = 0.73855\cdots,
$$
which is unconditionally sharper than \eqref{eq:ERH}. 
Note that Theorem~\ref{thm:QsMN} immediately implies the
lower bound $c N^{1-\beta}(\log N)^{-\gamma}$ (with a 
constant $c > 0$ that depends on $\alpha$)
for the number of \emph{distinct} quadratic fields
$\QQ\(\sqrt{u(n)}\,\)$ that arise as $n$ varies over
the interval $[M+1,M+N]$.

Our next result improves the bound of Theorem~\ref{thm:QsMN}
on average over $s$.
Let 
\begin{equation}
\label{eq:sQuMNSdefn}
\sQ_u(M,N;S)=\sum_{\substack{s\le S\\s\text{~sqfree}}} 
Q_u(M,N;s).
\end{equation}
The uniformity with respect to $s$ in the bound of Theorem~\ref{thm:QsMN} implies that 
$$
\sQ_u(M,N;S)\ll SN^{1/(2\alpha)+o(1)}\qquad(N\to\infty).
$$
This can be strengthened as follows. 

\begin{theorem}
\label{thm:QsMN Av1} 
In the notation of Theorem~\ref{thm:QsMN}
and of~\eqref{eq:sQuMNSdefn}, we have
$$
\sQ_u(M,N;S)\le\(SN^{1/(1+\alpha)}
+S^{1-1/(4\alpha)}N^{1/(2\alpha)}\)N^{o(1)}
$$
as $N\to\infty$,
where the function of $N$ implied by $o(1)$ depends only on $f$, $g$ and $\alpha$.
\end{theorem}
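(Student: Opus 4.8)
The plan is to pass to an equivalent counting problem, apply Heath--Brown's square sieve, and exploit the fact that the resulting double character sum factors into an ``$s$-part'' handled by Pólya--Vinogradov (and the large sieve) and an ``$n$-part'' handled by the character sum estimates underlying Theorem~\ref{thm:QsMN}. First I would note that, up to $O_{f,g}(1)$, the quantity $\sQ_u(M,N;S)$ equals the number of $n\in(M,M+N]$ whose squarefree kernel $\kappa(u(n))$ satisfies $\kappa(u(n))\le S$: for each such $n$ there is a unique squarefree $s\le S$ with $\QQ(\sqrt{u(n)}\,)=\QQ(\sqrt{s}\,)$, the error term absorbing the finitely many $n$ with $u(n)\le 0$ or $u(n)$ a perfect square (finiteness of the latter following from $\deg f\ge 3$ and the absence of multiple roots, via Siegel's theorem on $y^2=f(x)$). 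It therefore suffices to bound the number of pairs $(n,s)$ with $n\in(M,M+N]$ and $s\le S$ squarefree for which $s\,u(n)$ is a perfect square. One may also assume $S<N^{\alpha/(1+\alpha)}$, since otherwise $SN^{1/(1+\alpha)}\ge N\ge\sQ_u(M,N;S)$ and the statement is trivial.

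I would apply the square sieve to the multiset $\mathcal{A}=\{\,s\,u(n):n\in(M,M+N],\ s\le S\ \text{squarefree}\,\}$, of size $\ll SN$, against a set $\mathcal{P}$ of $\asymp z/\log z$ primes $\ell\in(z,2z]$ chosen, by means of~\eqref{eq:good alpha} and the remark relating it to~\eqref{eq:good alpha0}, so that $\tau_\ell\ge\ell^\alpha$ and moreover $q_\ell:=P^+(\ell-1)\ge\ell^\alpha$ with $q_\ell\mid\tau_\ell$; here $z$ is a parameter to be optimized at the end. The standard technicality that some elements of $\mathcal{A}$ may be divisible by many primes of $\mathcal{P}$ is dealt with by removing the $\ll Nz^{-\alpha}$ integers $n$ for which $\#\{\ell\in\mathcal{P}:\ell\mid u(n)\}$ is large, the bound $\sum_{\ell\in\mathcal{P}}\#\{n:\ell\mid u(n)\}\ll|\mathcal{P}|(Nz^{-\alpha}+1)$ being immediate from the $\tau_\ell$-periodicity of $n\mapsto g^n\bmod\ell$. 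Since the Jacobi symbol is multiplicative in its numerator, the square sieve then yields
\[
\sQ_u(M,N;S)\ll \frac{N}{z^{\alpha}}+\frac{SN\log z}{z}+\frac{1}{|\mathcal{P}|^{2}}\sum_{\substack{\ell_1,\ell_2\in\mathcal{P}\\ \ell_1\ne\ell_2}}\bigl|A(\ell_1\ell_2)\bigr|\,\bigl|B(\ell_1,\ell_2)\bigr|,
\]
where
\[
A(\ell_1\ell_2)=\sum_{\substack{s\le S\\ s\ \text{sqfree}}}\Bigl(\frac{s}{\ell_1\ell_2}\Bigr),\qquad
B(\ell_1,\ell_2)=\sum_{n\in(M,M+N]}\Bigl(\frac{u(n)}{\ell_1\ell_2}\Bigr).
\]

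The sum $A(\ell_1\ell_2)$ is a character sum to a modulus $\asymp z^2$; removing the squarefree condition by Möbius inversion and invoking Pólya--Vinogradov gives $|A(\ell_1\ell_2)|\ll z^{o(1)}\min\{S,(Sz)^{1/2}\}$, while for small $z$ one instead controls $\sum_{\ell_1\ne\ell_2}|A(\ell_1\ell_2)|^{2}$ by the multiplicative large sieve. The sum $B(\ell_1,\ell_2)$ is exactly the character sum over $\{g^n\}$ that is estimated in the proof of Theorem~\ref{thm:QsMN}: it is $\tau_\ell$-periodic in $n$, and completing it modulo $\operatorname{lcm}(\tau_{\ell_1},\tau_{\ell_2})$ reduces it to complete multiplicative character sums against the squarefree polynomial $f$ over the subgroup $\langle g\rangle$ and its cosets, which exhibit square-root cancellation by Weil's bound --- here one uses that $f$ is squarefree of degree $\ge 3$, so that none of the relevant mixed characters is principal --- producing $|B(\ell_1,\ell_2)|\ll \sqrt{\ell_1\ell_2}\,(N/\operatorname{lcm}(\tau_{\ell_1},\tau_{\ell_2})+\log z)$. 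One then splits the pairs: when $q_{\ell_1}\ne q_{\ell_2}$ one has $\operatorname{lcm}(\tau_{\ell_1},\tau_{\ell_2})\ge q_{\ell_1}q_{\ell_2}\ge z^{2\alpha}$ and $B$ is genuinely small, whereas the complementary pairs with $q_{\ell_1}=q_{\ell_2}$ number only $\ll|\mathcal{P}|^{2}z^{-\alpha+o(1)}$ (both $\ell_i-1$ being divisible by a common prime $\ge z^{\alpha}$), so that the trivial bound $|B(\ell_1,\ell_2)|\le N$ suffices for them. Substituting these estimates, summing over the pairs, and choosing $z$ optimally in the two ranges of $S$ separated by the balance of the sieve main term $SN/z$ against the error should lead to the stated bound $(SN^{1/(1+\alpha)}+S^{1-1/(4\alpha)}N^{1/(2\alpha)})N^{o(1)}$.

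The principal obstacle, as in Theorem~\ref{thm:QsMN}, is the estimate for $B(\ell_1,\ell_2)$: one must extract square-root cancellation from Weil's theorem while keeping tight control, through the large prime factor $P^+(\ell-1)\ge\ell^{\alpha}$, of the pairs of primes whose multiplicative orders are ``entangled'', i.e.\ have an abnormally small least common multiple, since it is precisely those pairs on which $B$ resists cancellation and only the trivial bound is available. A secondary difficulty is the final optimization: the Pólya--Vinogradov and large-sieve bounds for $A(\ell_1\ell_2)$ improve as $z$ shrinks, whereas the sieve main term $SN/z$ and the exceptional-$n$ term $Nz^{-\alpha}$ worsen, and reconciling these trade-offs --- together with the crossover between the two terms of the bound --- is what pins down the exponents $1/(1+\alpha)$ and $1/(2\alpha)$.
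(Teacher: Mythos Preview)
Your plan coincides with the paper's: square sieve over primes $\ell\in\mathcal L_z$ with $P^+(\ell-1)\ge\ell^\alpha$, factor the off-diagonal contribution as $A(\ell p)\,B(\ell,p)$, split the pairs according to whether $P^+(\ell-1)=P^+(p-1)$, bound $B$ via Weil, and control the $s$-sums on average by a large sieve for real characters. Two points of divergence deserve comment.

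First, the bound $|B(\ell_1,\ell_2)|\ll\sqrt{\ell_1\ell_2}\,\bigl(N/\operatorname{lcm}(\tau_{\ell_1},\tau_{\ell_2})+\log z\bigr)$ you state is \emph{sharper} than what the paper actually proves. The paper forces coprime orders by passing to $\lambda=g^h$ with $h=\gcd(\tau_{\ell_1},\tau_{\ell_2})$ (so that its product formula applies), and this inserts a factor $h$ into the error term: one gets $|B|\ll Nz^{1-2\alpha}+h\,z\log z$. The $h$-weighted piece is then handled separately (the paper's sum $\mathcal V_2$): there the character sum over $s$ is bounded \emph{trivially} by $S$, and one uses $\sum_{\ell\ne p\in\mathcal L_z}\gcd(\ell-1,p-1)\ll z^2$ (via Brun--Titchmarsh), giving a contribution $Sz$ after division by $|\mathcal L_z|^2$. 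It is precisely this $Sz$ term, balanced against $S^{1/2}Nz^{1-2\alpha}$ and $SNz^{-\alpha}$, that produces the exponents $1/(1+\alpha)$ and $1-1/(4\alpha)$ of the statement. If your cleaner $B$-bound could be justified (your sketch does not do so, and the paper does not attempt it), the optimization would give \emph{smaller} exponents on $S$ than those claimed; to arrive at the theorem as written you must track the $\gcd$ factor and treat it as above.

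Second, P\'olya--Vinogradov does not yield $|A(\ell_1\ell_2)|\ll (Sz)^{1/2}$; it gives $\ll\sqrt{\ell_1\ell_2}\log(\ell_1\ell_2)\asymp z\log z$, which is useless in the relevant range. The paper never invokes P\'olya--Vinogradov: it uses only the trivial bound $|A|\le S$ on $\mathcal V_2$, and on $\mathcal V_1$ it applies Cauchy together with Heath--Brown's mean value $\sum_{\ell\ne p}|A(\ell p)|^2\le S(z^2+S)(Sz)^{o(1)}$. This last estimate is exactly the ``multiplicative large sieve'' you mention as an alternative, and it is the only nontrivial input on the $s$-side.
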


Furthermore, using a slightly different approach we show that 

\begin{theorem}
\label{thm:QsMN Av2}
In the notation of Theorem~\ref{thm:QsMN}
and of~\eqref{eq:sQuMNSdefn}, we have
\begin{equation}
\label{eq:twoterms}
\sQ_u(M,N;S)\le \(S^{3/(3+\alpha)}N^{(3-\alpha)/(3+\alpha)}
+S^{1/2}N^{(3-\alpha)/(1+3\alpha)}\)N^{o(1)}
\end{equation}
as $N\to\infty$,
where the function of $N$ implied by $o(1)$ depends only on $f$, $g$ and $\alpha$.
\end{theorem}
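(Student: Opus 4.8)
\medskip

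The plan is to estimate $\sQ_u(M,N;S)$ by applying Heath--Brown's square sieve to a single enlarged family, rather than running it separately for each squarefree $s$ (which is, roughly, the strategy behind Theorem~\ref{thm:QsMN Av1}). Discarding the $O(1)$ values of $n$ with $u(n)\le0$, put
$$
\mathcal A=\bigl\{\,s\,u(n)\ :\ 1\le s\le S\ \text{squarefree},\ M<n\le M+N,\ u(n)>0\,\bigr\},
$$
viewed as a multiset. Since for each $n$ at most one squarefree $s$ makes $s\,u(n)$ a perfect square — namely the squarefree part of $u(n)$, and only when it is $\le S$ — the number of perfect squares in $\mathcal A$ is exactly $\sQ_u(M,N;S)$. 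Let $\mathcal L$ be a set of $\gg L/\log L$ primes $\ell\in[L,2L]$ with $P^+(\ell-1)\ge\ell^\alpha$, which exists by \eqref{eq:good alpha}; here $L$ is a free parameter, subject only to the lower bound $L\gg\log\max\mathcal A$ required by the sieve. The sieve gives
$$
\sQ_u(M,N;S)\ll\frac{|\mathcal A|}{|\mathcal L|}
+\frac{L^{o(1)}}{|\mathcal L|^2}\sum_{\substack{\ell_1,\ell_2\in\mathcal L\\ \ell_1\ne\ell_2}}
\Bigl|\sum_{(s,n)\in\mathcal A}\Bigl(\frac{s\,u(n)}{\ell_1\ell_2}\Bigr)\Bigr|.
$$
The advantage of the enlarged family is that the Jacobi symbol is completely multiplicative, so the bilinear sum \emph{splits} into a product,
$$
\sum_{(s,n)\in\mathcal A}\Bigl(\frac{s\,u(n)}{\ell_1\ell_2}\Bigr)
=\Bigl(\sum_{\substack{s\le S\\ s\ \text{sqfree}}}\Bigl(\frac{s}{\ell_1\ell_2}\Bigr)\Bigr)
\Bigl(\sum_{M<n\le M+N}\Bigl(\frac{u(n)}{\ell_1\ell_2}\Bigr)\Bigr),
$$
and the two factors can be bounded independently.

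For the $s$-factor: since $\ell_1\ne\ell_2$, $\bigl(\tfrac{\cdot}{\ell_1\ell_2}\bigr)$ is a non-principal character modulo $\ell_1\ell_2$, so removing the squarefreeness condition via $\mu^2(s)=\sum_{d^2\mid s}\mu(d)$ and applying the P\'olya--Vinogradov inequality yields
$$
\Bigl|\sum_{s\le S,\ s\ \text{sqfree}}\Bigl(\frac{s}{\ell_1\ell_2}\Bigr)\Bigr|
\ll\min\bigl(S,\ S^{1/2}L^{1/2}\bigr)\,L^{o(1)},
$$
which is the source of the factor $S^{1/2}$ in the second term of \eqref{eq:twoterms}. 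For the $n$-factor I would first delete from $\mathcal L$ those primes $\ell$ for which $g$ is a $p$-th power modulo $\ell$, where $p=P^+(\ell-1)$ — at most a proportion $1/p=o(1)$ of $\mathcal L$ — so that for the surviving $\ell$ the order $\tau_\ell$ is divisible by a prime $\ge\ell^\alpha$; for all but $o(|\mathcal L|^2)$ pairs those prime divisors attached to $\ell_1,\ell_2$ are distinct, whence the period of $n\mapsto g^n\bmod\ell_1\ell_2$ is $\ge L^{2\alpha}$. Splitting $n$ into residue classes modulo this period, a complete-period sum is $\sum_{z}\bigl(\tfrac{f(z_1)}{\ell_1}\bigr)\bigl(\tfrac{f(z_2)}{\ell_2}\bigr)$ taken over the cyclic subgroup $\langle(g\bmod\ell_1,\,g\bmod\ell_2)\rangle\le\mathbb F_{\ell_1}^\times\times\mathbb F_{\ell_2}^\times$; detecting membership in that subgroup by multiplicative characters and invoking Weil's bound (non-degenerate precisely because $f$ is squarefree of degree $\ge3$, hence not a monomial — the hypothesis inherited from Theorem~\ref{thm:QsMN}) bounds it by $\ll(\ell_1\ell_2)^{1/2}$. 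Feeding this into a completion argument of Burgess type, available because the period carries a large prime divisor, leads to an estimate of the form
$$
\Bigl|\sum_{M<n\le M+N}\Bigl(\frac{u(n)}{\ell_1\ell_2}\Bigr)\Bigr|\ll N^{1-c(\alpha)}L^{o(1)}
$$
with an explicit $c(\alpha)>0$ depending on $\alpha$ and $\deg f$, together with a weaker but still admissible bound for the exceptional pairs.

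Inserting these two estimates into the sieve inequality and using $|\mathcal A|\asymp SN$ and $|\mathcal L|\asymp L/\log L$, one obtains
$$
\sQ_u(M,N;S)\ll\Bigl(\frac{SN}{L}+\min\bigl(S,\ S^{1/2}L^{1/2}\bigr)\,N^{1-c(\alpha)}\Bigr)N^{o(1)},
$$
and it remains to choose $L$. In the regime $L\le S$ one balances $SN/L$ against $S^{1/2}L^{1/2}N^{1-c(\alpha)}$, and in the complementary regime one balances $SN/L$ against $S\,N^{1-c(\alpha)}$; after substituting the value of $c(\alpha)$ dictated by the character-sum bound, the two optimizations give respectively the terms $S^{3/(3+\alpha)}N^{(3-\alpha)/(3+\alpha)}$ and $S^{1/2}N^{(3-\alpha)/(1+3\alpha)}$ in \eqref{eq:twoterms}.

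I expect the character-sum estimate for $\bigl|\sum_{M<n\le M+N}\bigl(\tfrac{u(n)}{\ell_1\ell_2}\bigr)\bigr|$ to be the main obstacle: one needs it with the sharp exponent and uniformly in $M,N,\ell_1,\ell_2$, which forces a hybrid of Weil's bound for complete-period sums with a completion/amplification step that exploits only that $\tau_\ell$ has a prime divisor $\ge\ell^\alpha$ — rather than that $\tau_\ell$ itself is large — while keeping the contribution of the exceptional primes and pairs under control. A secondary point requiring care is the uniformity in $M$: the sieve can only be run with primes $\ell\gg\log\max\mathcal A$, a quantity that grows with $M$, so the character-sum bound must remain non-trivial (indeed, it becomes stronger) as this lower bound on $L$ increases.
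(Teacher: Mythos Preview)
Your overall framework---apply the square sieve to the multiset $\{s\,u(n)\}$ and factor the off-diagonal Jacobi sum as an $s$-sum times an $n$-sum---is indeed the paper's starting point. But two of your inputs are not strong enough, and the missing ingredient is precisely what distinguishes Theorem~\ref{thm:QsMN Av2} from Theorem~\ref{thm:QsMN Av1}.

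First, the asserted bound $\bigl|\sum_{M<n\le M+N}(\tfrac{u(n)}{\ell_1\ell_2})\bigr|\ll N^{1-c(\alpha)}L^{o(1)}$ with $c(\alpha)$ depending only on $\alpha$ is not available. Weil plus Fourier completion gives Lemma~\ref{lem:Char fgu pl N}, which after the reduction in the proof of Theorem~\ref{thm:QsMN} becomes
\[
\sum_{M<n\le M+N}\Bigl(\frac{u(n)}{\ell_1\ell_2}\Bigr)\ll N L^{1-2\alpha}+\gcd(\ell_1-1,\ell_2-1)\,L\log L,
\]
a two-term bound in which the saving is a power of $L$, not of $N$, and the second term depends on the pair through the gcd. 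There is no ``Burgess-type'' argument here: Burgess exploits multiplicative structure in the summation variable, which $n\mapsto f(g^n)$ does not have. As a check, your proposed balance of $SN/L$ against $S^{1/2}L^{1/2}N^{1-c}$ yields $S^{2/3}N^{1-2c/3}$; matching this to $S^{3/(3+\alpha)}$ forces $\alpha=3/2$, so the arithmetic cannot close.

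Second, bounding each $s$-sum individually by P\'olya--Vinogradov is too crude. The paper instead applies Cauchy--Schwarz over the pairs $(\ell_1,\ell_2)$ and then Heath-Brown's mean-value estimate for real characters (Lemma~\ref{lem:HB}); this is how the factor $S^{1/2}$ enters efficiently. The new idea specific to Theorem~\ref{thm:QsMN Av2} is to handle the gcd-weighted piece $\sV_2=\sum_{\ell_1,\ell_2}\gcd(\ell_1{-}1,\ell_2{-}1)\bigl|\sum_{s\le S}(\tfrac{s}{\ell_1\ell_2})\bigr|$ by a dyadic decomposition in the size of the gcd: for each range $e^\nu<\gcd\le e^{\nu+1}$ one first counts the admissible pairs (there are $\ll L^2e^{-\nu}$ of them), then applies Cauchy--Schwarz and Lemma~\ref{lem:HB} to the remaining $s$-sums. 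It is this interplay between the gcd coming from the $n$-sum bound and the large-sieve-type averaging over moduli for the $s$-sum that produces the exponents in~\eqref{eq:twoterms}; a pointwise bound on either factor alone does not.
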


One verifies that the second term in the bound of Theorem~\ref{thm:QsMN Av1}
dominates the first one for small values of $S$ and the switching point  at $S =  N^{2(1-\alpha)/(1+\alpha)}$
is above the value of $S =  N^{2(1-\alpha)/(1+3\alpha)}$, 
where the bound of  Theorem~\ref{thm:QsMN Av2} become stronger.  Thus, 
straightforward calculations show that
Theorems~\ref{thm:QsMN Av1} and~\ref{thm:QsMN Av2}
can be combined into the following statement:

\begin{cor}
\label{cor:QsMN Av-Comb}
In the notation of Theorem~\ref{thm:QsMN}
and of~\eqref{eq:sQuMNSdefn}, we have
\begin{align*}
\sQ_u(M&,N;S)\\
&\le \begin{cases}
S^{1-1/(4\alpha)}N^{1/(2\alpha)+o(1)}
&\quad\hbox{if $S\le N^{2(1-\alpha)/(1+3\alpha)}$},\\
S^{1/2}N^{(3-\alpha)/(1+3\alpha)+o(1)}
&\quad\text{if $N^{2(1-\alpha)/(1+3\alpha)}< S\le N^{4(1-\alpha)/(1+3\alpha)}$},\\
S^{3/(3+\alpha)}N^{(3-\alpha)/(3+\alpha)+o(1)}
&\quad\text{if $ N^{4(1-\alpha)/(1+3\alpha)} < S\le N^{2\alpha/3}$},
\end{cases}
\end{align*}
as $N\to\infty$,
where the functions of $N$ implied by $o(1)$ depend only on $f$, $g$ and $\alpha$.
\end{cor}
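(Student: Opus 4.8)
The plan is to deduce the three cases by an elementary comparison of the two‑term bounds of Theorems~\ref{thm:QsMN Av1} and~\ref{thm:QsMN Av2}. Since $A+B\le 2\max\{A,B\}$ for $A,B\ge 0$, with the constant absorbed into the factor $N^{o(1)}$, each of those bounds may be replaced by the larger of its two monomials in $S$ and $N$. Writing
\[
T_1=SN^{1/(1+\alpha)},\qquad T_2=S^{1-1/(4\alpha)}N^{1/(2\alpha)},
\]
\[
U_1=S^{3/(3+\alpha)}N^{(3-\alpha)/(3+\alpha)},\qquad U_2=S^{1/2}N^{(3-\alpha)/(1+3\alpha)},
\]
Theorem~\ref{thm:QsMN Av1} reads $\sQ_u(M,N;S)\le\max\{T_1,T_2\}\cdot N^{o(1)}$ and Theorem~\ref{thm:QsMN Av2} reads $\sQ_u(M,N;S)\le\max\{U_1,U_2\}\cdot N^{o(1)}$, so it remains only to identify the dominant monomial on each of the three stated ranges of $S$.

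First I would compare the two terms of Theorem~\ref{thm:QsMN Av1}: one has $T_1/T_2=S^{1/(4\alpha)}N^{(\alpha-1)/(2\alpha(1+\alpha))}$, which, because $\alpha<1$, is at most $1$ exactly when $S\le N^{2(1-\alpha)/(1+\alpha)}$; since $2(1-\alpha)/(1+3\alpha)<2(1-\alpha)/(1+\alpha)$, throughout the first range $S\le N^{2(1-\alpha)/(1+3\alpha)}$ we get $\max\{T_1,T_2\}=T_2$, whence $\sQ_u(M,N;S)\le S^{1-1/(4\alpha)}N^{1/(2\alpha)+o(1)}$. Next I would compare the two terms of Theorem~\ref{thm:QsMN Av2}: here $U_1/U_2=S^{(3-\alpha)/(2(3+\alpha))}N^{-2(3-\alpha)(1-\alpha)/((3+\alpha)(1+3\alpha))}$, which is at most $1$ exactly when $S\le N^{4(1-\alpha)/(1+3\alpha)}$ and at least $1$ for larger $S$. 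Hence $\max\{U_1,U_2\}=U_2$ on $N^{2(1-\alpha)/(1+3\alpha)}<S\le N^{4(1-\alpha)/(1+3\alpha)}$, giving $\sQ_u(M,N;S)\le S^{1/2}N^{(3-\alpha)/(1+3\alpha)+o(1)}$, while $\max\{U_1,U_2\}=U_1$ on $N^{4(1-\alpha)/(1+3\alpha)}<S\le N^{2\alpha/3}$, giving $\sQ_u(M,N;S)\le S^{3/(3+\alpha)}N^{(3-\alpha)/(3+\alpha)+o(1)}$. Together these are the three cases.

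It then remains to confirm that the ranges dovetail, that is, $2(1-\alpha)/(1+3\alpha)<4(1-\alpha)/(1+3\alpha)<2\alpha/3$: the first inequality is trivial, and the second is equivalent to $3\alpha^2+7\alpha-6>0$, i.e.\ to $\alpha>2/3$, which holds for the admissible value $\alpha=0.677$ in~\eqref{eq:good alpha}. As a consistency check I would also observe that the bound is continuous at the two interior breakpoints, since there $T_2=U_2$ and $U_1=U_2$ respectively, and — by three further one‑line ratio computations (for $T_2$ versus $U_2$, for $T_1$ versus $U_2$, and for $T_1$ versus $U_1$) — that on each of the three ranges the displayed monomial is in fact the \emph{smaller} of the bounds of the two theorems, so the three‑fold split loses nothing. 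The whole argument is a routine manipulation of exponents; the only step that warrants any care is pinning down the ordering of the breakpoints, and I foresee no genuine obstacle.
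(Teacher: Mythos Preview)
Your proposal is correct and follows exactly the route the paper indicates: the paper gives no detailed proof, merely remarking before the corollary that ``straightforward calculations show'' how the bounds combine, together with the hint that the switching point $S=N^{2(1-\alpha)/(1+\alpha)}$ of Theorem~\ref{thm:QsMN Av1} lies above the crossover $S=N^{2(1-\alpha)/(1+3\alpha)}$ where Theorem~\ref{thm:QsMN Av2} becomes stronger. Your comparison of $T_1,T_2,U_1,U_2$ makes this explicit and is precisely what is intended; the extra continuity and optimality checks are not required for the statement but are a welcome sanity check.
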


We note that the end point in the third  range of Corollary~\ref{cor:QsMN Av-Comb}
is unnecessary; it is given to indicate the largest value of $S$ for which we
have an improvement over the trivial bound $\sQ_u(M,N;S) \le N$. 

The proofs of  Theorems~\ref{thm:QsMN Av1} and~\ref{thm:QsMN Av2}
involve sums with Jacobi symbols over integers $s \in [1,S]$. We use a result 
of Heath-Brown~\cite[Corollary~3]{HB2} to estimate such sums ``on-average,''
but on several occasions we need ``individual'' estimates, and in those
instances we use only the trivial bound $S$ to estimate the sums.  This does
not involve any substantial sacrifice, however, since in the most interesting ranges
these sums are shorter than the range covered by the 
Polya-Vinogradov and Burgess  bounds (see~\cite[Theorems~12.5 and 12.6]{IwKow}).
On the other hand, under the ERH, not only can we use any $\alpha<1$ 
in~\eqref{eq:good alpha}, but we can also exploit a square root cancellation in
character sums (see Lemma~\ref{lem:ERH} below),
which leads to a much better estimate. 

\begin{theorem}
\label{thm:QsMN Av ERH}
Under the ERH, in the notation of Theorem~\ref{thm:QsMN}
and of~\eqref{eq:sQuMNSdefn}, we have
$$
\sQ_u(M,N;S)\le S ^{1/2} N^{1/(1+\alpha)+o(1)}
$$
as $N\to\infty$,
where the function of $N$ implied by $o(1)$ depends only on $f$, $g$ and $\alpha$.
\end{theorem}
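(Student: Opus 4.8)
The plan is to estimate $\sQ_u(M,N;S)$ by Heath-Brown's square sieve, along the lines of the proof of Theorem~\ref{thm:QsMN}, but feeding in the two strengthenings that the ERH provides: by Lemma~\ref{lem:Mult Ord ERH} one may take the exponent $\alpha_0=1-\eps$ in \eqref{eq:good alpha0} for any fixed $\eps>0$, and by Lemma~\ref{lem:ERH} the sums over squarefree $s$ that occur below exhibit square-root cancellation. Recall that $\QQ(\sqrt{u(n)})=\QQ(\sqrt s)$ is equivalent to $s\,u(n)$ being a perfect square, so that $\sQ_u(M,N;S)$ is the number of pairs $(s,n)$ with $s\le S$ squarefree, $M<n\le M+N$, and $s\,f(g^n)$ a perfect square.

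First I would apply the square sieve to the multiset $\mathcal A=\{\,s\,f(g^n)\,\}$, where $(s,n)$ ranges over the pairs just described, using a set $\mathcal P$ of primes $\ell\in(Q,2Q]$ — with $Q$ a parameter to be chosen at the end — subject to the requirements that $\ell$ be prime to a fixed integer depending only on $f$ and $g$ (so that the Weil bounds used below apply), that $\tau_\ell\ge\ell^{\alpha_0}$ with $\alpha_0=1-\eps$ (possible by Lemma~\ref{lem:Mult Ord ERH}), and that $P^+(\ell-1)\ge\ell^\alpha$ with $P^+(\ell-1)\mid\tau_\ell$. Combining \eqref{eq:good alpha}, Lemma~\ref{lem:Mult Ord ERH}, and a routine sieve for the divisibility $P^+(\ell-1)\mid\tau_\ell$ (which fails for only $o(Q/\log Q)$ of the relevant $\ell$) gives $\#\mathcal P\gg Q/\log Q$. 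It is important to use the square sieve in the form that retains the term $\tfrac{2}{\#\mathcal P}\sum_{\ell\in\mathcal P}\#\{a\in\mathcal A:\ell\mid a\}$ rather than bounding it crudely by a multiple of $\log(\max\mathcal A)$; this is what keeps the final estimate uniform in $M$, because the entries of $\mathcal A$ then enter only through $\#\{n\in(M,M+N]:\ell\mid f(g^n)\}\ll N/\tau_\ell+1$, which is independent of $M$. The sieve reduces the problem to the bilinear character sums
\[
\sum_{(s,n)}\left(\frac{s\,f(g^n)}{\ell\ell'}\right)
=\left(\sum_{\substack{s\le S\\ s\ \mathrm{sqfree}}}\left(\frac{s}{\ell\ell'}\right)\right)
\left(\sum_{M<n\le M+N}\left(\frac{f(g^n)}{\ell\ell'}\right)\right),
\qquad \ell\ne\ell'\in\mathcal P,
\]
which factor since the Jacobi symbol is multiplicative.

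The two factors are then estimated separately. For the sum over $n$, the key point is that $n\mapsto\left(\frac{f(g^n)}{\ell\ell'}\right)$ is periodic of period $T_{\ell\ell'}=\operatorname{lcm}(\tau_\ell,\tau_{\ell'})$; completing the sum and invoking Weil's bound for multiplicative character sums over the cyclic subgroup $\langle g\rangle\subseteq(\ZZ/\ell\ell'\ZZ)^{\times}$ (this is where $\deg f\ge 3$ and the absence of multiple roots are used, to exclude degenerate characters) gives $\left|\sum_{M<n\le M+N}\left(\frac{f(g^n)}{\ell\ell'}\right)\right|\ll(N/T_{\ell\ell'}+1)(\ell\ell')^{1/2+o(1)}$. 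For all but $o\bigl((\#\mathcal P)^2\bigr)$ of the pairs one has, since $P^+(\ell'-1)$ is a prime dividing $\tau_{\ell'}$ but not $\tau_\ell$, the bound $T_{\ell\ell'}\ge\tau_\ell\cdot P^+(\ell'-1)\ge Q^{\alpha_0+\alpha}=Q^{1+\alpha-\eps}$, so that this $n$-sum is $\ll(NQ^{-\alpha}+Q)N^{o(1)}$ off a negligible set of pairs, for which the trivial bound $N$ is used; this is exactly where the ERH value $\alpha_0\to1$ enters. For the sum over $s$, Lemma~\ref{lem:ERH} applied to the real character modulo $\ell\ell'$ furnished by the Jacobi symbol yields $\bigl|\sum_{s\le S,\ \mathrm{sqfree}}\left(\frac{s}{\ell\ell'}\right)\bigr|\ll S^{1/2}(\ell\ell')^{o(1)}=S^{1/2}N^{o(1)}$; this replaces the bound $S$ — all that is available for individual moduli in the proofs of Theorems~\ref{thm:QsMN Av1} and~\ref{thm:QsMN Av2} — and is what turns the factor $S$ into $S^{1/2}$.

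Assembling the pieces, the square sieve should yield an estimate of the shape $\sQ_u(M,N;S)\ll\bigl(SNQ^{-1}+S^{1/2}NQ^{-\alpha}+S^{1/2}Q+S\bigr)N^{o(1)}$, and optimising over $Q$ — the essentially best choice being $Q=N^{1/(1+\alpha)}$, at which the second and third terms balance — leads to the claimed bound $\sQ_u(M,N;S)\le S^{1/2}N^{1/(1+\alpha)+o(1)}$. I expect the hard part to be the lower bound $T_{\ell\ell'}\ge Q^{1+\alpha-\eps}$ for almost all pairs of primes in $\mathcal P$: one must estimate the number of pairs with $P^+(\ell-1)=P^+(\ell'-1)$, and of pairs for which the large prime $P^+(\ell'-1)$ divides $\ell-1$, and show both are $o\bigl((\#\mathcal P)^2\bigr)$. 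Additional care is needed to see that the first and last terms above — together with the sieve remainder terms built from $\#\{n:\ell\mid f(g^n)\}$ and from sums of the type $\sum_{s\le S}d^{\omega(s)}/\tau_s$ — remain acceptable uniformly in $M$ and throughout the range of $S$ for which the result is asserted, possibly by combining the square-sieve bound with the uniform bound of Theorem~\ref{thm:QsMN} or the trivial bound $\sQ_u(M,N;S)\le N$.
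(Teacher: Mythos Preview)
Your overall strategy is right—square sieve with a prime set having both large multiplicative order (via ERH) and a large prime factor of $\ell-1$, then factor the bilinear sum and apply the ERH character-sum bound to the $s$-part—but there is a genuine gap in the handling of the diagonal term.

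By sieving the multiset $\mathcal A=\{s f(g^n)\}$ of size $\asymp SN$, the diagonal contribution $|\mathcal A|/|\mathcal P|$ is of order $SNQ^{-1}(\log Q)$, exactly the term $SNQ^{-1}$ you record. At your optimal choice $Q=N^{1/(1+\alpha)}$ this is $SN^{\alpha/(1+\alpha)+o(1)}$, which exceeds the target $S^{1/2}N^{1/(1+\alpha)+o(1)}$ as soon as $S>N^{2(1-\alpha)/(1+\alpha)}$. Neither of your suggested patches closes the gap: the trivial bound $\sQ_u\le N$ is only $\le S^{1/2}N^{1/(1+\alpha)}$ once $S\ge N^{2\alpha/(1+\alpha)}$, and Theorem~\ref{thm:QsMN} gives at best $SN^{1/(2\alpha)+o(1)}$. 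So for $N^{2(1-\alpha)/(1+\alpha)}<S<N^{2\alpha/(1+\alpha)}$ (a nonempty range for every $\alpha\in(\tfrac12,1)$) your argument does not deliver the stated bound. The paper avoids this entirely by \emph{not} sieving the multiset of pairs: it runs the sieve for each fixed $s$, splitting $[M+1,M+N]$ into a ``good'' set $\sN_z$ and an error set $\sE_z=\{n:\omega_z(u(n))>\tfrac12|\sL_z|\}$ that is \emph{independent of $s$}. Since each $n$ lies in at most one $\sN_{s,z}$ (the sets are disjoint as $s$ varies), summing over $s\le S$ costs only a single copy of $|\sE_z|\ll Nz^{-\alpha}$, not $S$ copies. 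This replaces your $SNQ^{-1}$ by $NQ^{-\alpha}$, which is harmless.

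A secondary point: your claimed per-pair bound
\[
\Bigl|\sum_{M<n\le M+N}\Bigl(\tfrac{f(g^n)}{\ell\ell'}\Bigr)\Bigr|\ll\bigl(N/T_{\ell\ell'}+1\bigr)(\ell\ell')^{1/2+o(1)}
\]
is sharper than what the paper's Lemma~\ref{lem:Char fgu pl N} gives directly; that lemma, applied with $\lambda=g^h$ where $h=\gcd(\tau_\ell,\tau_{\ell'})$, produces the extra factors $h$ seen in~\eqref{eq:Prelim}. The paper does not attempt your stronger bound; instead it keeps the $h$-factors and controls them on average via the sums $Q=\sum_{(\ell,p)}\gcd(\ell-1,p-1)^2\ll z^{3-\alpha}$ and $T=\sum_{(\ell,p)}\gcd(\ell-1,p-1)\ll z^{2+o(1)}$, using the inequality $\gcd(\ell-1,p-1)\le Cz^{1-\alpha}$ coming from the condition $P^+(\ell-1)\ge z^\alpha$. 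This yields the same off-diagonal contribution $S^{1/2}(Nz^{-\alpha}+z)z^{o(1)}$ that your route would, so the discrepancy is only in the intermediate step—but you would need to prove your cleaner completion bound separately (e.g.\ by detecting membership in $\langle g\rangle\subset(\ZZ/\ell\ell'\ZZ)^\times$ with characters and then factoring via CRT), since it is not supplied by the paper.
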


Examining the proof of Theorem~\ref{thm:QsMN Av ERH} we see that it also yields the 
bound
$$
Q_u(M,N;s) \ll  N^{1/(1+\alpha)+o(1)},
$$
which with $\alpha$ as in~\eqref{eq:BH bound} becomes 
$$
Q_u(M,N;s) \le N^{1000/1677+o(1)};
$$
this improves~\eqref{eq:ERH}  and the unconditional bound of Theorem~\ref{thm:QsMN}
(note that $1000/1677 = 0.596302 \cdots $).

We remark that Cutter, Granville and Tucker~\cite[Theorems~1A and~1B]{CGT}
have obtained an asymptotic
formula for the number of distinct fields of the form $\QQ\(\sqrt{f(n)}\,\)$ with
$n=1,\ldots,N$, where $f(X)\in\ZZ[X]$ is a given polynomial
of degree at most two. For  polynomials $f(X)$ of degree three
or more, a conditional asymptotic formula based on the $ABC$-conjecture
is given in~\cite[Theorem~1C]{CGT}; see also~\cite{LuSh1}.

\section{Preliminaries}

\subsection{General notation} 
\label{sec:gen not}
For an odd integer $m$, we use $(\tfrac km)$ to
denote the Jacobi symbol of $k$ modulo~$m$. 

We also use $\varphi(k)$ to denote the Euler function
of an integer $k \ge 1$. 

As usual, we write $\e(t)= \exp(2 \pi i t)$ for all $t\in\RR$.

For a fixed nonzero integer $\lambda$ and any integer $m$
that is coprime to $\lambda$, we use $\tau_m(\lambda)$ to denote
the multiplicative order of $\lambda$ modulo $m$, that is, 
the smallest positive integer $\tau$ for which
$g^\tau\equiv 1\bmod p$.

Throughout the paper, we use the symbols $O$, $o$, $\ll$, $\gg$
and $\asymp$ along with their standard meanings; any constants or
functions implied by these symbols may depend on the fixed
polynomial $f(X)\in\ZZ[X]$ or the parameter $\alpha$ but are
independent of other variables except where indicated.

\subsection{Auxiliary results}

In \S\ref{sec:proofs} we use the following technical lemma; for a proof, see
Graham and Kolesnik~\cite[Lemma~2.4]{GraKol}.

\begin{lemma}
\label{lem:grakol}
Let
$$
\cB(z)=\sum_{j=1}^J A_j z^{B_j}+\sum_{k=1}^K C_k z^{-D_k},
$$
where $A_j,B_j,C_k,D_k>0$.  For any $z_2\ge z_1>0$ there exists $z\in[z_1,z_2]$ such that
$$
\cB(z)\ll \sum_{j=1}^J \sum_{k=1}^K T_{jk} 
+\sum_{j=1}^J A_j z_1^{B_j}+\sum_{k=1}^K C_k z_2^{-D_k},
$$
where
$$
T_{jk}  = \(A_j^{D_k} C_k^{B_j}\)^{1/(B_j+D_k)}
\qquad (1\le j\le J,~1\le k\le K).
$$
\end{lemma}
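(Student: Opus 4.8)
The plan is to exploit the monotonicity structure of $\cB$. I would write $\cB(z)=P(z)+M(z)$, where $P(z)=\sum_{j=1}^{J}A_jz^{B_j}$ is strictly increasing and $M(z)=\sum_{k=1}^{K}C_kz^{-D_k}$ is strictly decreasing on $(0,\infty)$. Since $P$ and $M$ are continuous and $P-M$ is strictly increasing, one of the following holds on $[z_1,z_2]$: (i) $P(z_1)\ge M(z_1)$, so that $P\ge M$ throughout; (ii) $P(z_2)\le M(z_2)$, so that $P\le M$ throughout; or (iii) there is a (unique) point $z^\ast\in(z_1,z_2)$ with $P(z^\ast)=M(z^\ast)$. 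In case (i) I would take $z=z_1$, so that $\cB(z_1)\le 2P(z_1)=2\sum_{j}A_jz_1^{B_j}$; in case (ii) I would take $z=z_2$, so that $\cB(z_2)\le 2M(z_2)=2\sum_{k}C_kz_2^{-D_k}$. Both of these are already dominated by the right-hand side of the claimed bound, so these cases are immediate.

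The substantive case is (iii), where I would take $z=z^\ast$, so that $\cB(z^\ast)=2P(z^\ast)=2M(z^\ast)$ and it remains only to bound $P(z^\ast)$ by $\sum_{j,k}T_{jk}$. Fix $j$. From $A_j(z^\ast)^{B_j}\le P(z^\ast)=M(z^\ast)=\sum_{k=1}^{K}C_k(z^\ast)^{-D_k}$ and pigeonhole, there is an index $k=k(j)$ with $A_j(z^\ast)^{B_j}\le K\,C_k(z^\ast)^{-D_k}$; rearranging gives $(z^\ast)^{B_j+D_k}\le KC_k/A_j$, and hence
$$
A_j(z^\ast)^{B_j}\le A_j\(\frac{KC_k}{A_j}\)^{B_j/(B_j+D_k)}=K^{B_j/(B_j+D_k)}\(A_j^{D_k}C_k^{B_j}\)^{1/(B_j+D_k)}\le K\,T_{jk}.
$$
Summing over $j=1,\dots,J$ yields $P(z^\ast)\le K\sum_{j}T_{j,k(j)}\le K\sum_{j,k}T_{jk}$, and therefore $\cB(z^\ast)\le 2K\sum_{j,k}T_{jk}$.

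Combining the three cases, for a suitable $z\in[z_1,z_2]$ one obtains
$$
\cB(z)\ll\sum_{j,k}T_{jk}+\sum_{j}A_jz_1^{B_j}+\sum_{k}C_kz_2^{-D_k},
$$
with an implied constant depending only on $J$ and $K$ (which are bounded in all applications of the lemma). The only delicate point is the transition in case (iii) from the single balance point $z^\ast$ to the individual quantities $T_{jk}$: one must first use pigeonhole to pair off one decreasing term with each increasing term and only then rearrange, rather than attempting to balance the whole sums $P$ and $M$ against one another simultaneously. Apart from that, the argument is elementary bookkeeping with the monotonicity of $P$ and $M$ together with the identity $A_jz^{B_j}=T_{jk}$ at the balance point $z=(C_k/A_j)^{1/(B_j+D_k)}$; if one wishes, the constant can be sharpened slightly by noting that in case (iii) one also has $M(z^\ast)\le J\sum_{j,k}T_{jk}$ by the symmetric argument.
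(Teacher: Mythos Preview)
Your argument is correct: the trichotomy based on the sign of $P-M$ at the endpoints, together with the pigeonhole step at the balance point $z^\ast$, cleanly yields the stated bound with an implied constant depending only on $J$ and $K$. The paper does not supply its own proof but simply cites Graham and Kolesnik~\cite[Lemma~2.4]{GraKol}, whose argument is along the same monotonicity/balance-point lines you describe.
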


As usual, we use $\pi(t;m,a)$ to denote the number of primes
$p\le t$ for which $p \equiv a \bmod m$.
We apply the Brun-Titchmarsh theorem in the following relaxed form
(see~\cite[Theorem~6.6]{IwKow} for 
 a much more precise statement). 
 
 \begin{lemma}
\label{lem:B-T}
Fix $\eta > 0$. For any real number $t\ge 2$ and integer
$m\le t^{1-\eta}$, we have 
$$
\pi(t; m, a) \ll \frac{t}{\varphi(m)\log t},
$$
where the implied constant depends only on $\eta$. 
\end{lemma}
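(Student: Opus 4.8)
The plan is to deduce this from the classical Brun--Titchmarsh inequality, after first disposing of the non‑coprime case, which is trivial. Suppose first that $d=\gcd(a,m)>1$ and let $q$ be a prime dividing $d$. Any prime $p\le t$ with $p\equiv a\bmod m$ then satisfies $q\mid p$ and hence $p=q$, so there is at most one such prime and $\pi(t;m,a)\le 1$. Since $m\le t^{1-\eta}$ gives $\varphi(m)\le m\le t^{1-\eta}$, one has $t/(\varphi(m)\log t)\ge t^{\eta}/\log t$, and the latter is bounded below by a positive constant depending only on $\eta$ for all $t\ge 2$; this settles the case $\gcd(a,m)>1$.

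When $\gcd(a,m)=1$ I would simply invoke the sharp form of Brun--Titchmarsh due to Montgomery and Vaughan (as recorded in~\cite[Theorem~6.6]{IwKow}), namely
$$
\pi(t;m,a)\le\frac{2t}{\varphi(m)\log(t/m)}\qquad(1\le m<t).
$$
Because $m\le t^{1-\eta}$ we have $t/m\ge t^{\eta}$ and therefore $\log(t/m)\ge\eta\log t$; feeding this into the displayed bound gives $\pi(t;m,a)\le(2/\eta)\,t/(\varphi(m)\log t)$, which is the assertion with implied constant $2/\eta$.

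There is no real obstacle here: the lemma is a deliberate coarsening of a known theorem, and the only point worth flagging is that permitting $m$ to be as large as $t^{1-\eta}$ shrinks the denominator $\log(t/m)$ in Brun--Titchmarsh to size $\asymp\eta\log t$, which is exactly why the implied constant must be allowed to depend on $\eta$. If a self‑contained argument were preferred, one could instead run Selberg's upper bound sieve directly on the progression $\{n\le t:\ n\equiv a\bmod m\}$ (which has $\asymp t/m$ elements), sifting out the primes $q\nmid m$ below a suitable $z$: taking the level of distribution to be $D\asymp t/(\varphi(m)(\log t)^{2})$ keeps the remainder term $O(D)$ below the main term while still forcing $\log D\asymp\log t$, the primes $p\le z=\sqrt D$ contribute only $O(z)=o\(t/(\varphi(m)\log t)\)$, and the factor $\prod_{q\le z,\,q\nmid m}(1-1/q)^{-1}\gg(\log z)\,\varphi(m)/m$ coming from the sieve is precisely what converts the trivial count $t/m$ into the desired $t/\varphi(m)$.
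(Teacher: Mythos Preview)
Your proposal is correct and follows exactly the approach indicated in the paper: the lemma is stated there as a relaxed form of the Brun--Titchmarsh theorem with a reference to~\cite[Theorem~6.6]{IwKow}, and you have simply spelled out the one-line deduction (together with the trivial non-coprime case). The additional Selberg-sieve sketch is unnecessary but harmless.
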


We also need the following estimate. 

\begin{lemma}
\label{lem:Euler recipr}
For any real number $t\ge 2$, we have 
$$
\sum_{n\le t}\frac{n}{\varphi(n)^2}\ll\log t.
$$
\end{lemma}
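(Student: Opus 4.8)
The plan is to exploit that $n/\varphi(n)$ is multiplicative and that, after dividing by $n$, the resulting multiplicative function has a convergent associated Euler product. First I would rewrite
$$
\sum_{n\le t}\frac{n}{\varphi(n)^2}=\sum_{n\le t}\frac1n\left(\frac{n}{\varphi(n)}\right)^2=\sum_{n\le t}\frac{f(n)}{n},
$$
where $f(n)=(n/\varphi(n))^2$ is multiplicative and takes the constant value $f(p^k)=(p/(p-1))^2$ on every prime power $p^k$.

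Next I would factor $f$ through a Dirichlet convolution. Writing $f=g*\mathbf 1$, the function $g=\mu*f$ is multiplicative, nonnegative, and supported on squarefree integers, with
$$
g(p)=\left(\frac{p}{p-1}\right)^2-1=\frac{2p-1}{(p-1)^2},
$$
and $g(p^k)=0$ for $k\ge2$ since $f$ is constant on powers of a fixed prime. Substituting $f(n)=\sum_{d\mid n}g(d)$ and interchanging the order of summation gives
$$
\sum_{n\le t}\frac{f(n)}{n}=\sum_{d\le t}\frac{g(d)}{d}\sum_{m\le t/d}\frac1m\le(1+\log t)\sum_{d\le t}\frac{g(d)}{d}.
$$

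Finally I would bound the $d$-sum by completing it: since $g(d)/d$ is a nonnegative multiplicative function,
$$
\sum_{d\le t}\frac{g(d)}{d}\le\prod_p\left(1+\frac{g(p)}{p}\right)=\prod_p\left(1+\frac{2p-1}{p(p-1)^2}\right),
$$
and this product converges because its local factors are $1+O(p^{-2})$. Combining the last two displays yields $\sum_{n\le t}n/\varphi(n)^2\ll\log t$ for $t\ge2$, as claimed. There is no serious obstacle here; the only point requiring care is to convolve $f$ against $\mathbf 1$ (rather than peeling off all of $f$ at once), which is exactly what produces the harmonic sum $\sum_{m\le t/d}1/m$ and hence a logarithmic — rather than linear — bound.
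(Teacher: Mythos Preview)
Your proof is correct, and it takes a genuinely different (and more elementary) route than the paper. The paper observes that $f(n)=n^2/\varphi(n)^2$ is multiplicative with $f(p^k)=1+2/p+O(p^{-2})$, invokes Wirsing's theorem as a black box to conclude $\sum_{n\le t}f(n)\ll t$, and then uses partial summation to pass from this to $\sum_{n\le t}f(n)/n\ll\log t$. You instead write $f=g*\mathbf 1$ explicitly, interchange the order of summation, and bound the resulting $d$-sum by a convergent Euler product. Your argument is entirely self-contained and avoids appealing to a general Tauberian-type theorem; the paper's approach is shorter on the page but hides the work inside the citation to Wirsing. Both ultimately rely on the same arithmetic fact that $(n/\varphi(n))^2$ has local factors $1+O(p^{-1})$, so the $\log t$ growth is the same in either framework.
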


\begin{proof}
If $f$ is any multiplicative function
satisfying 
\begin{equation}
\label{eq:wirscond}
0\le f(p^k)=1+\frac{c}{p}+O(p^{-2})\qquad(p~\text{prime},~k\ge 1),
\end{equation}
where the constant $c$ and that implied by the $O$-symbol
depend only on $f$, then as a special case of the well known
theorem of Wirsing~\cite{Wirs} one sees that
$\sum_{n\le t}f(n)\ll t$. Applying this result with the function
$f(n)=n^2/\varphi(n)^2$ (which verifies~\eqref{eq:wirscond}
with $c=2$), we deduce the bound
$$
\sum_{n\le t}\frac{n^2}{\varphi(n)^2}\ll t.
$$
The stated result follows from this by partial summation.
\end{proof}

One can easily obtain an asymptotic formula for the sum in 
Lemma~\ref{lem:Euler recipr}, but the upper bound is
quite sufficient for our purposes here. 

\subsection{Multiplicative orders}

We recall the following result of  Erd\H os and  Murty \cite[Theorem~4]{ErdMur} 
in a slightly weakened form. 

\begin{lemma}
\label{lem:Mult Ord ERH} Under the ERH, for any fixed integer $g>1$ the
inequality $\tau_\ell(g)>\ell/\log\ell$ holds for all primes $\ell\le z$
with at most $o(z/\log z)$ exceptions as $z\to\infty$.
\end{lemma}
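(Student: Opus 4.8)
The plan is to recast the statement as an estimate of Artin type. For a prime $\ell$ not dividing $g$ put $I_\ell=(\ell-1)/\tau_\ell(g)$, the index of $\langle g\rangle$ in $(\ZZ/\ell\ZZ)^{*}$, and let $E(z)$ be the set of primes $\ell\le z$ with $\tau_\ell(g)\le\ell/\log\ell$. Since there are only $O(z^{1/2}/\log z)=o(z/\log z)$ primes $\ell\le z^{1/2}$, these may be absorbed into the exceptional set; and for $\ell\in(z^{1/2},z]$ the inequality $\tau_\ell(g)\le\ell/\log\ell$ forces $I_\ell\ge(\ell-1)(\log\ell)/\ell\ge\tfrac13\log z$ once $z$ is large. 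Thus it suffices to show that
\[
A(z):=\#\{\ell\le z:\ell\text{ prime},\ I_\ell\ge\tfrac13\log z\}=o(z/\log z)\qquad(z\to\infty).
\]

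Next I would localize inside $I_\ell$ a divisor of size about $\log z$. Listing the prime factors of $I_\ell$ in increasing order and forming partial products, one obtains a divisor $e\mid I_\ell$ with $\tfrac13\log z\le e<\tfrac13(\log z)\,P^{+}(I_\ell)$, where $P^{+}$ denotes the largest prime factor. If $P^{+}(I_\ell)>z^{1/2}(\log z)^{2}$, then $\tau_\ell(g)=(\ell-1)/I_\ell<z^{1/2}(\log z)^{-2}$, so $\ell$ divides $g^{m}-1$ for some $m<z^{1/2}(\log z)^{-2}$, and the number of such primes is at most $\sum_{m<z^{1/2}(\log z)^{-2}}\omega(g^{m}-1)\ll(\log g)\,z(\log z)^{-4}=o(z/\log z)$. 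In the remaining case $e<z^{1/2}(\log z)^{3}$, and since $e\mid I_\ell$ is equivalent to $\ell$ splitting completely in the Kummer field $K_e=\QQ(\zeta_e,g^{1/e})$ (of degree $\asymp e\varphi(e)$, with implied constant depending only on $g$), I am reduced to bounding
\[
\sum_{\frac13\log z\le e<z^{1/2}(\log z)^{3}}\#\{\ell\le z:\ell\ \text{splits completely in}\ K_e\}.
\]

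For the small moduli $\tfrac13\log z\le e\le z^{1/6}(\log z)^{-1}$ I would invoke the conditional effective Chebotarev density theorem, which under the ERH gives $\#\{\ell\le z:\ell\ \text{splits in}\ K_e\}\ll z/(e\varphi(e)\log z)+z^{1/2}e^{2}\log(egz)$; here the main terms contribute $\ll(z/\log z)\sum_{e\ge\frac13\log z}1/(e\varphi(e))\ll z(\log\log\log z)(\log z)^{-2}$, and the error terms contribute $\ll z^{1/2}(\log z)\sum_{e\le z^{1/6}}e^{2}\ll z(\log z)^{-2}$, both $o(z/\log z)$. It is essential that the summation runs over divisors $e$ which are \emph{already} $\ge\tfrac13\log z$, rather than over arbitrary (possibly very small) prime divisors of the index, precisely because $\sum_{e\ge\frac13\log z}1/(e\varphi(e))\to0$. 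The main obstacle is the medium range $z^{1/6}(\log z)^{-1}<e<z^{1/2}(\log z)^{3}$: there the conditional Chebotarev error is too large to sum, whereas the crude bound $\#\{\ell\le z:\ell\equiv1\bmod e\}\ll z/(\varphi(e)\log z)$ (valid since $e<z^{2/3}$) only yields $\sum_e z/(\varphi(e)\log z)\asymp z$, falling short of $o(z/\log z)$ by a factor $\asymp\log z$. To recover this factor one must, as in Hooley's conditional treatment of Artin's primitive root conjecture, keep the full condition ``$g$ is an $e$-th power modulo $\ell$'' — i.e.\ $\tau_\ell(g)\mid(\ell-1)/e$ — and weigh the ensuing congruence $\ell\equiv1\bmod eq$ for a prime $q\mid I_\ell$ against the fact that $\ell$ belongs to the thin set of prime divisors of the cyclotomic value $\Phi_d(g)$ with $d=\tau_\ell(g)$; a careful balancing of these two constraints controls the medium range as well. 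Combining the three ranges gives $A(z)=o(z/\log z)$, and hence the lemma. (All of this is carried out by Erd\H os and Murty in the proof of \cite[Theorem~4]{ErdMur}, from which the delicate medium-range estimate may be borrowed.)
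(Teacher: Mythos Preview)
The paper gives no proof of this lemma at all: it is stated as a direct citation of Erd\H os and Murty~\cite[Theorem~4]{ErdMur} (``We recall the following result \ldots\ in a slightly weakened form''), with no argument supplied. Your proposal, by contrast, sketches the underlying Hooley--type argument (passing to the index $I_\ell$, extracting a divisor $e$ of controlled size, and invoking the conditional effective Chebotarev theorem for the Kummer extensions $K_e=\QQ(\zeta_e,g^{1/e})$), but you yourself acknowledge that the decisive medium range $z^{1/6+o(1)}<e<z^{1/2+o(1)}$ cannot be closed with the tools you display and must be ``borrowed'' from the very same reference~\cite[Theorem~4]{ErdMur}. So in substance both the paper and your proposal rest on the identical citation; you have merely unpacked more of the surrounding scaffolding.

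Two minor remarks on that scaffolding. First, the precise shape of the conditional Chebotarev error you quote, $z^{1/2}e^{2}\log(egz)$, is not quite the standard Lagarias--Odlyzko form (which involves $\log|d_{K_e}|$ and $[K_e:\QQ]$ rather than a bare $e^{2}$); this does not affect the outcome in the small range, but would need to be stated more carefully in a self-contained write-up. Second, your handling of the medium range is only a heuristic indication (``a careful balancing of these two constraints''), not an argument; since this is exactly the nontrivial part of~\cite{ErdMur}, it is fair to say that your proposal, like the paper, is ultimately a citation of Erd\H os--Murty rather than an independent proof.
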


From this we immediately derive the next statement.

\begin{cor}
\label{cor:Mult Ord ERH} Under the ERH, for any fixed integer $g >1$  and any  fixed $\alpha>1/2$ which is 
admissible for~\eqref{eq:good alpha} we have  
$$
\left|\left\{\ell \le z:\ell~\text{is prime~} \tau_\ell(g) > \ell/\log \ell~\text{and~}
P^+(\ell-1)\ge \ell^\alpha\right\}\right| \gg \frac{z}{\log z} \qquad (z \to \infty).
$$ 
\end{cor}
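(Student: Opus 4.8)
The plan is to obtain the claimed lower bound by combining the two density statements we already have in hand — Lemma~\ref{lem:Mult Ord ERH} and the hypothesis \eqref{eq:good alpha} — via a simple union-bound (complementation) argument. Write
$$
\cA(z)=\{\ell\le z:\ell~\text{prime},~P^+(\ell-1)\ge\ell^\alpha\},
\qquad
\cE(z)=\{\ell\le z:\ell~\text{prime},~\tau_\ell(g)\le\ell/\log\ell\}.
$$
By hypothesis \eqref{eq:good alpha} we have $|\cA(z)|\gg z/\log z$, while Lemma~\ref{lem:Mult Ord ERH} (under the ERH) gives $|\cE(z)|=o(z/\log z)$ as $z\to\infty$. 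The set whose cardinality we must bound from below is exactly $\cA(z)\setminus\cE(z)$, so
$$
\left|\{\ell\le z:\ell~\text{prime},~\tau_\ell(g)>\ell/\log\ell~\text{and}~P^+(\ell-1)\ge\ell^\alpha\}\right|
\;=\;|\cA(z)\setminus\cE(z)|\;\ge\;|\cA(z)|-|\cE(z)|.
$$

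For all sufficiently large $z$, the right-hand side is $\gg z/\log z - o(z/\log z)$, which is itself $\gg z/\log z$; this is precisely the assertion of the corollary. I would spell out the elementary inequality $|\cA\setminus\cE|\ge|\cA|-|\cA\cap\cE|\ge|\cA|-|\cE|$ and then quote the two cited density bounds to close the argument. Since $\alpha>1/2$ is assumed admissible for \eqref{eq:good alpha}, the hypothesis of that bound is genuinely available, and no further input is needed.

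There is no real obstacle here: the only subtlety is purely bookkeeping — making sure the $o(z/\log z)$ savings in Lemma~\ref{lem:Mult Ord ERH} is subtracted from a quantity that is $\asymp z/\log z$ rather than merely $O(z/\log z)$, so that the difference retains the lower-order-of-magnitude $\gg z/\log z$. (The implied constant in the conclusion depends only on $g$ and on the implied constant in \eqref{eq:good alpha}, hence only on $g$ and $\alpha$, as claimed.) This is why the corollary is stated as an immediate consequence of the lemma.
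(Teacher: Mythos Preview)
Your argument is correct and is exactly the ``immediate'' derivation the paper has in mind: the corollary is stated without proof as a direct consequence of Lemma~\ref{lem:Mult Ord ERH}, and your complementation step $|\cA(z)\setminus\cE(z)|\ge|\cA(z)|-|\cE(z)|\gg z/\log z$ is the intended one-line justification.
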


\section{Character sums}

\subsection{Bounds on character sums with exponential functions}
\label{sec:char sum}

In this section only, we write $\tau_m$ for $\tau_m(\lambda)$ to simplify
the notation.

For the proof of Theorem~\ref{thm:QsMN}, we need some bounds
for character sums.

We use the following variant of the result of Korobov~\cite[Theorem~3]{Kor}.
We present it here in a simplified form which is suited to our applications and 
can be extended in several directions. 

\begin{lemma}
\label{lem:Prod Form} Let $f(X) \in \ZZ[X]$, and let $\lambda\in\ZZ$,
$\lambda\ne 0$. Let   $\ell,p$  be distinct primes with 
$$
\gcd(\ell p, \lambda) = \gcd(\tau_\ell, \tau_p) = 1.
$$ 
For any integer $a$ we define integers $a_\ell$ and $a_p$ by the conditions
$$
a_\ell \tau_p + a_p \tau_\ell \equiv a \bmod {\tau_{\ell p}},
\qquad 0 \le a_\ell <  \tau_\ell,\qquad \ 0 \le a_p < \tau_p.
$$
Then,
\begin{align*}
\sum_{n=1}^{\tau_{\ell  p}}\(\frac{f(\lambda^n)}{\ell   p}\) &\e(an/\tau_{\ell  p})
= \sum_{x=1}^{\tau_\ell}\(\frac{f(\lambda^x)}{\ell}\) \e(a_\ell x/\tau_\ell)  
\sum_{y=1}^{\tau_p} \(\frac{f(\lambda^y)}{p}\) \e(a_p y/\tau_p).
\end{align*}
\end{lemma}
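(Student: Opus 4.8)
The plan is to use the Chinese Remainder Theorem (CRT) to split the sum over $n\bmod\tau_{\ell p}$ into a product of a sum over $x\bmod\tau_\ell$ and a sum over $y\bmod\tau_p$. The hypothesis $\gcd(\tau_\ell,\tau_p)=1$ is exactly what guarantees $\tau_{\ell p}=\operatorname{lcm}(\tau_\ell,\tau_p)=\tau_\ell\tau_p$, so the residue ring $\ZZ/\tau_{\ell p}\ZZ$ factors as $\ZZ/\tau_\ell\ZZ\times\ZZ/\tau_p\ZZ$. First I would fix the CRT bijection: as $n$ runs over a complete residue system mod $\tau_{\ell p}$, the pair $(x,y)$ defined by $x\equiv n\bmod\tau_\ell$, $y\equiv n\bmod\tau_p$ runs over a complete residue system mod $(\tau_\ell,\tau_p)$, and conversely each such pair determines a unique $n\bmod\tau_{\ell p}$. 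I will reindex the left-hand sum by this pair.

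Next I would check that each of the three factors in the summand respects this factorization. For the Jacobi symbol, multiplicativity in the lower argument gives $\(\frac{f(\lambda^n)}{\ell p}\)=\(\frac{f(\lambda^n)}{\ell}\)\(\frac{f(\lambda^n)}{p}\)$; and since $\lambda^{\tau_\ell}\equiv1\bmod\ell$, the value $f(\lambda^n)\bmod\ell$ depends only on $n\bmod\tau_\ell$, i.e.\ only on $x$, so $\(\frac{f(\lambda^n)}{\ell}\)=\(\frac{f(\lambda^x)}{\ell}\)$, and similarly $\(\frac{f(\lambda^n)}{p}\)=\(\frac{f(\lambda^y)}{p}\)$ depends only on $y$. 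For the additive character, I would use the defining congruence $a\equiv a_\ell\tau_p+a_p\tau_\ell\bmod\tau_{\ell p}$ to write
$$
\frac{an}{\tau_{\ell p}}\equiv\frac{a_\ell n}{\tau_\ell}+\frac{a_p n}{\tau_p}\pmod 1,
$$
using $\tau_{\ell p}=\tau_\ell\tau_p$; then $\e(an/\tau_{\ell p})=\e(a_\ell n/\tau_\ell)\,\e(a_p n/\tau_p)=\e(a_\ell x/\tau_\ell)\,\e(a_p y/\tau_p)$, since the first exponential depends on $n$ only mod $\tau_\ell$ and the second only mod $\tau_p$. Collecting these, the summand over $n$ equals the product of an $x$-dependent factor and a $y$-dependent factor, and summing over the pair $(x,y)$ with $1\le x\le\tau_\ell$, $1\le y\le\tau_p$ factors the double sum into the asserted product of two single sums.

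The only genuinely delicate points are the arithmetic identities $\tau_{\ell p}=\tau_\ell\tau_p$ and the congruence manipulation with $a_\ell,a_p$, both of which hinge on the coprimality hypothesis $\gcd(\tau_\ell,\tau_p)=1$; once these are in place the argument is a routine CRT factorization, so I do not anticipate a substantial obstacle. I would also note in passing that $\gcd(\ell p,\lambda)=1$ is needed merely so that $\lambda$ has well-defined multiplicative orders mod $\ell$, $p$ and $\ell p$, and that the ranges of summation ($1$ to $\tau_\ell$, etc.) may be taken to be any complete residue systems since all three factors are periodic with the stated periods.
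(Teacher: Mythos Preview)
Your proposal is correct and follows essentially the same CRT factorization as the paper. The only cosmetic difference is the choice of parametrization: the paper writes $n=x\tau_p+y\tau_\ell$ (so the Jacobi symbols first involve $\lambda^{x\tau_p}$ and $\lambda^{y\tau_\ell}$, requiring a final substitution $x\mapsto x\tau_p^{-1}\bmod\tau_\ell$, $y\mapsto y\tau_\ell^{-1}\bmod\tau_p$ to reach the stated form), whereas you take $x\equiv n\bmod\tau_\ell$, $y\equiv n\bmod\tau_p$ directly and instead absorb the twist into the additive character via the defining relation for $a_\ell,a_p$; both are standard ways to carry out the same splitting.
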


\begin{proof} We follow closely the proof of~\cite[Theorem~3]{Kor}. 
Using the coprimality condition   $\gcd(\tau_\ell, \tau_p)=1$ we see that  
the integers 
$$
x \tau_p + y \tau_\ell,\qquad  0 \le x < \tau_\ell,\qquad \ 0 \le y < \tau_p, 
$$
run through the complete residue system modulo $\tau_{\ell p} = \tau_\ell \tau_p$.
Moreover,
$$
\lambda^{x \tau_p + y \tau_\ell} \equiv \lambda^{x \tau_p} \bmod \ell,\qquad  
\lambda^{x \tau_p + y \tau_\ell} \equiv \lambda^{y \tau_\ell} \bmod p,
$$
and
$$
 \e(a (x \tau_p + y \tau_\ell)/\tau_{\ell  p})= 
  \e (ax/\tau_\ell)\,\e(a y/\tau_p), 
$$
Hence, using the multiplicativity of the Jacobi symbol, we have
\begin{align*}
&\sum_{n=1}^{\tau_{\ell  p}}\(\frac{f(\lambda^n)}{\ell   p}\) \e(an/\tau_{\ell  p})  \\
&  \qquad\qquad = \sum_{x=1}^{\tau_\ell}
 \sum_{y=1}^{\tau_p} \(\frac{f(\lambda^{x \tau_p + y \tau_\ell})}{\ell}\)    
  \(\frac{f(\lambda^{x \tau_p + y \tau_\ell} )}{p}\)  \e(a (x \tau_p + y \tau_\ell)/\tau_{\ell  p})\\
  &  \qquad\qquad = \sum_{x=1}^{\tau_\ell}
\(\frac{f(\lambda^{x \tau_p})}{\ell}\)     \e (ax/\tau_\ell )
 \sum_{y=1}^{\tau_p}  \(\frac{f(\lambda^{y \tau_\ell} )}{p}\)   \e (a y/\tau_p).
\end{align*}
Replacing $x$ with $x \tau_p^{-1} \bmod {\tau_\ell}$
and $y$ with $y \tau_\ell^{-1} \bmod {\tau_p}$,
and taking into account that
$a \tau_p^{-1} \equiv a_\ell \bmod {\tau_\ell}$
and
$a \tau_\ell^{-1} \equiv a_p \bmod {\tau_p}$,
the result follows. 
\end{proof}

The next statement follows immediately from the Weil bound 
on sums with multiplicative characters; see, for example,~\cite[Theorem~11.23]{IwKow}.

\begin{lemma}
\label{lem:Char fgu p}
Let $f(X) \in \ZZ[X]$ be monic of degree $d\ge 1$ with no multiple roots
in its splitting field, and let $\lambda\in\ZZ$, $\lambda\ne 0$.
For any prime $p$ coprime to $\lambda f(0)$ and any integer~$a$,
we have
$$
\sum_{x=1}^{\tau_p}
\(\frac{f(\lambda^{x})}{p}\)\e(ax/\tau_p)\ll p^{1/2}.
$$
\end{lemma}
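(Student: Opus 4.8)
\medskip
\noindent\textbf{Proof strategy.}
The plan is to recognise the left-hand side as a multiplicative character sum over the cyclic group generated by $\lambda$, to unfold that group by orthogonality, and thereby to reduce everything to complete character sums over $\mathbb{F}_p$ to which the Weil bound applies. Throughout write $\chi_2=\bigl(\tfrac{\cdot}{p}\bigr)$ for the quadratic character modulo the (odd) prime $p$, write $\tau=\tau_p(\lambda)$, and let $G=\langle\lambda\bmod p\rangle\subseteq\mathbb{F}_p^{\times}$, a cyclic group of order $\tau$. The first step is to observe that, as $x$ ranges over $\{1,\dots,\tau\}$, the element $t=\lambda^{x}$ ranges over $G$ exactly once, and that the rule $\lambda^{x}\mapsto\e(ax/\tau)$ is a well-defined homomorphism $G\to\mathbb{C}^{\times}$, hence a multiplicative character $\psi$ of $G$. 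Consequently
\[
\sum_{x=1}^{\tau}\Bigl(\frac{f(\lambda^{x})}{p}\Bigr)\e(ax/\tau)=\sum_{t\in G}\chi_2\bigl(f(t)\bigr)\,\psi(t).
\]

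Next I would extend $\psi$ to some multiplicative character $\widetilde\psi$ of $\mathbb{F}_p^{\times}$ and detect membership in $G$ by orthogonality: writing $m=(p-1)/\tau$ and letting $H$ be the group of the $m$ characters of $\mathbb{F}_p^{\times}$ that are trivial on $G$, one has $\mathbf 1_{G}(t)=m^{-1}\sum_{\eta\in H}\eta(t)$ for $t\in\mathbb{F}_p^{\times}$, whence
\[
\sum_{t\in G}\chi_2\bigl(f(t)\bigr)\psi(t)=\frac1m\sum_{\eta\in H}\ \sum_{t\in\mathbb{F}_p^{\times}}\chi_2\bigl(f(t)\bigr)\,(\widetilde\psi\eta)(t).
\]
It therefore suffices to show that $\bigl|\sum_{t\in\mathbb{F}_p^{\times}}\chi_2(f(t))\,\rho(t)\bigr|\ll p^{1/2}$, with an implied constant depending only on $f$, \emph{uniformly over all} multiplicative characters $\rho$ of $\mathbb{F}_p^{\times}$; summing the resulting $m$ bounds and dividing by $m$ then yields the lemma.

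For this final estimate I would appeal to the Weil bound for multiplicative character sums, as in \cite[Theorem~11.23]{IwKow}. Since $\gcd(p,f(0))=1$ the monomial $X$ is coprime to $f(X)$ in $\mathbb{F}_p[X]$, and for every prime $p\nmid\operatorname{disc}f$ the reduction of $f$ modulo $p$ remains squarefree; hence $Xf(X)$ is a squarefree polynomial of degree $d+1$ over $\mathbb{F}_p$. Applying the (mixed) Weil bound to the characters $\chi_2$ attached to $f(X)$ and $\rho$ attached to $X$ gives $\bigl|\sum_{t\in\mathbb{F}_p^{\times}}\chi_2(f(t))\rho(t)\bigr|\ll p^{1/2}$ with the implied constant depending only on $d$, provided the corresponding product of characters is non-trivial; the only exceptions are the characters $\rho$ of order at most $2$, for which the sum reduces to $\sum_t\chi_2(f(t))$ or to $\sum_t\chi_2(tf(t))$ and the ordinary one-polynomial Weil bound applies directly. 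The finitely many primes $p$ dividing $\operatorname{disc}f$, together with the bounded set of $p$ for which the non-degeneracy check fails, are absorbed into the implied constant via the trivial bound $\bigl|\sum_{t\in\mathbb{F}_p^{\times}}\chi_2(f(t))\rho(t)\bigr|\le p-1$.

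The delicate point — essentially the only place the argument can go wrong if handled carelessly — is that the bound in the last step must be uniform in the order of $\rho$. The tempting shortcut of writing both $\chi_2$ and $\rho$ as powers of a single character of order $\operatorname{lcm}(2,\operatorname{ord}\rho)$ is useless here, since it inflates the degree of the polynomial entering the Weil bound to a quantity that can be as large as $\asymp p$. Using instead the genuine two-polynomial form of Weil's bound, whose implied constant depends only on the total degree $d+1$, circumvents this, and the remainder of the proof is bookkeeping.
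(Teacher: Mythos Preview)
Your argument is correct and is essentially the paper's proof in dual form. The paper writes $\lambda=\vartheta^{s}$ for a primitive root $\vartheta$ (with $s=(p-1)/\tau_p$, which is your $m$), extends the $x$-sum to $\{1,\dots,p-1\}$ by periodicity, and substitutes $w=\vartheta^{x}$ to obtain
\[
\sum_{x=1}^{\tau_p}\Bigl(\tfrac{f(\lambda^{x})}{p}\Bigr)\e(ax/\tau_p)
=\frac{1}{s}\sum_{w\in\mathbb{F}_p^{\times}}\Bigl(\tfrac{f(w^{s})}{p}\Bigr)\chi(w)
\]
for a suitable multiplicative character $\chi$; it then checks that $f(X^{s})$ is squarefree over $\mathbb{F}_p$ (via the derivative, using $p\nmid f(0)$) and invokes the Weil bound of \cite[Theorem~11.23]{IwKow}. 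Your orthogonality unfold is the character-theoretic dual of the $s$-th power substitution: in place of one complete sum with the degree-$ds$ polynomial $f(X^{s})$, you produce $m=s$ complete sums each with the fixed polynomial $f(X)$; in both routes the Weil constant carries a factor $\asymp s$ that cancels against the $1/s$ out front. Your presentation has the minor advantage that the uniformity of the implied constant in $p$ is visible from the outset (the polynomials entering Weil have total degree $d+1$ throughout), whereas in the paper's version one must keep track of the $ds$ distinct roots of $f(X^{s})$ and cancel that factor against $1/s$.
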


\begin{proof} Denoting $s = (p-1)/\tau_p$, 
we can write $\lambda= \vartheta^s$ with \emph{some} primitive root $\vartheta$ modulo $p$. 
Then,
\begin{equation}
\label{eq:Expand}
\begin{split}
\sum_{x=1}^{\tau_p} \(\frac{f(\lambda^{x})}{p}\)\e(ax/\tau_p) 
&=\frac{1}{\tau_p}\sum_{x=1}^{p-1} \(\frac{f(\vartheta^{sx})}{p}\)\e(asx/(p-1))\\
&=\frac{1}{s}\sum_{w=1}^{p-1}\(\frac{f(w^s)}{p}\)\chi(w),
\end{split}
\end{equation}
where $\chi$ is the multiplicative character modulo $p$ defined by
$$
\chi(w)=\e(asx/(p-1))\qquad(w\in\ZZ,~p\nmid w),
$$
where $x$ is any integer for which $w\equiv\vartheta^x\bmod p$.

Let $g(X) = f'(X)$ be the derivative of $f$. 
Since $f(X)$ has no multiple roots, $f(0) \not \equiv  0 \bmod p$, and
$$
 \frac{d}{dX}\,f(X^s)=sX^{s-1}g(X^s),
$$
we see that $f(X^s)$ has no multiple roots (and zero is not a root of $f(X^s)$). 
Thus, the Weil  bound in the form given by~\cite[Theorem~11.23]{IwKow} shows that
$$
\sum_{w=1}^{p-1}\(\frac{f(w^s)}{p}\)\chi(w)\ll p^{1/2}.
$$
Using~\eqref{eq:Expand} the result now follows. 
\end{proof}

Combining Lemmas~\ref{lem:Prod Form} and~\ref{lem:Char fgu p}, 
we obtain the following statement.

\begin{lemma}
\label{lem:Char fgu pl} 
Let $f(X) \in \ZZ[X]$ be monic of degree $d\ge 1$ with no multiple roots
in its splitting field, and let $\lambda\in\ZZ$, $\lambda\ne 0$.
Let   $\ell,p$  be distinct primes with 
$$
\gcd(\ell p, \lambda f(0)) = \gcd(\tau_\ell, \tau_p) = 1.
$$ 
Then,
$$
\sum_{n=1}^{\tau_{\ell p} }\(\frac{f(\lambda^n)}{\ell   p}\)
\e(an/\tau_{\ell  p})\ll(\ell p)^{1/2}.
$$
\end{lemma}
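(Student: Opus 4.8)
The plan is to combine the two preceding lemmas in the obvious way. Lemma~\ref{lem:Prod Form} (applied with the prime product $\ell p$ in place of a single modulus) expresses the character sum over a full period $\tau_{\ell p}=\tau_\ell\tau_p$ as a \emph{product} of two shorter sums, one modulo $\ell$ and one modulo $p$:
$$
\sum_{n=1}^{\tau_{\ell p}}\(\frac{f(\lambda^n)}{\ell p}\)\e(an/\tau_{\ell p})
=\(\sum_{x=1}^{\tau_\ell}\(\frac{f(\lambda^x)}{\ell}\)\e(a_\ell x/\tau_\ell)\)
\(\sum_{y=1}^{\tau_p}\(\frac{f(\lambda^y)}{p}\)\e(a_p y/\tau_p)\),
$$
where $a_\ell,a_p$ are the integers produced by the Chinese Remainder decomposition in the statement of Lemma~\ref{lem:Prod Form}. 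This identity requires exactly the hypotheses $\gcd(\ell p,\lambda)=\gcd(\tau_\ell,\tau_p)=1$, which are part of our hypotheses here.

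Next I would bound each of the two factors individually by Lemma~\ref{lem:Char fgu p}. For that lemma one needs $p$ (resp.\ $\ell$) to be coprime to $\lambda f(0)$, and $f$ monic of degree $d\ge1$ with no multiple roots in its splitting field --- all of which are assumed in the present statement (the coprimality $\gcd(\ell p,\lambda f(0))=1$ is stronger than what each application needs). The shift parameters $a_\ell$ and $a_p$ are arbitrary integers, so Lemma~\ref{lem:Char fgu p} applies to each factor with its own shift, giving
$$
\sum_{x=1}^{\tau_\ell}\(\frac{f(\lambda^x)}{\ell}\)\e(a_\ell x/\tau_\ell)\ll\ell^{1/2},
\qquad
\sum_{y=1}^{\tau_p}\(\frac{f(\lambda^y)}{p}\)\e(a_p y/\tau_p)\ll p^{1/2}.
$$
Multiplying these two bounds yields $\ll(\ell p)^{1/2}$, which is exactly the assertion. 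The implied constant depends only on $f$ (through its degree $d$ and the Weil bound), as required.

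There is essentially no obstacle here; the only point worth a moment's care is that in invoking Lemma~\ref{lem:Prod Form} one must treat the product $\ell p$ as the modulus in the Jacobi symbol and check that the factorization identity is being read with the correct pairing of shifts $a_\ell\leftrightarrow\tau_\ell$, $a_p\leftrightarrow\tau_p$; this is purely bookkeeping. One should also note that the case $\tau_{\ell p}$ could in principle be small, but the bound $(\ell p)^{1/2}$ is trivially true then anyway, so no separate treatment is needed. Hence the proof is a two-line deduction: apply Lemma~\ref{lem:Prod Form}, then bound each factor by Lemma~\ref{lem:Char fgu p}.
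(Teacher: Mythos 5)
Your proposal matches the paper's argument exactly: the paper explicitly introduces Lemma~\ref{lem:Char fgu pl} with the one-line remark that it follows by ``combining Lemmas~\ref{lem:Prod Form} and~\ref{lem:Char fgu p},'' which is precisely the factorization-then-Weil-bound deduction you give. Nothing to add.
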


Using Lemma~\ref{lem:Char fgu pl} we derive the following statement,
which is our principal technical tool; this result
improves upon~\cite[Lemma~4.1]{LuSh2} but requires that the additional
coprimality condition $\gcd(\tau_\ell,\tau_p)=1$ is met; see also~\cite{DobWil,Yu}
for some similar bounds with linear polynomials.

\begin{lemma}
\label{lem:Char fgu pl N}
Let $f(X) \in \ZZ[X]$ be monic of degree $d\ge 1$ with no multiple roots
in its splitting field, and let $\lambda\in\ZZ$, $\lambda\ne 0$.
Let   $\ell,p$  be distinct primes with 
$$
\gcd(\ell p, \lambda f(0)) = \gcd(\tau_\ell, \tau_p) = 1.
$$ 
Then, for any integer  $A$ with $\gcd(A, \ell p) =1$ and $K \ge 1$,  we have
$$
\sum_{n=1}^{K}\(\frac{f(A \lambda^k)}{\ell   p}\) 
 \ll \frac{K(\ell   p)^{1/2}}{\tau_{\ell p}}+ (\ell   p)^{1/2} \log
(\ell   p).
$$
\end{lemma}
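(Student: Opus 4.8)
The plan is to treat this as an incomplete sum of a periodic function and estimate it by the classical completion technique (the Polya--Vinogradov method), reducing everything to the complete exponential--character sums already controlled in Lemma~\ref{lem:Char fgu pl}. Write $T=\tau_{\ell p}$ and $\chi(n)=\left(\frac{f(A\lambda^n)}{\ell p}\right)$; since $\lambda^n\bmod \ell p$ depends only on $n\bmod T$, the function $\chi$ is periodic with period $T$. For $a\in\ZZ$ set $W(a)=\sum_{n=1}^{T}\chi(n)\e(an/T)$. I would first record the completion identity
\[
\sum_{n=1}^{K}\chi(n)=\frac1T\sum_{a\bmod T}W(a)\sum_{m=1}^{K}\e(-am/T),
\]
which holds for \emph{every} $K\ge 1$: it follows at once from the orthogonality relation $\frac1T\sum_{a\bmod T}\e(a(n-m)/T)=\mathbf 1[n\equiv m\pmod T]$ together with the $T$-periodicity of $\chi$. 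Consequently
\[
\Bigl|\sum_{n=1}^{K}\chi(n)\Bigr|\le\frac1T\sum_{a\bmod T}|W(a)|\cdot\Bigl|\sum_{m=1}^{K}\e(am/T)\Bigr|.
\]

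Next I would bound each complete sum $W(a)$ by $O\bigl((\ell p)^{1/2}\bigr)$, uniformly in $a$. This is essentially the content of Lemma~\ref{lem:Char fgu pl}, the only difference being that there the polynomial is evaluated at $\lambda^n$ rather than at $A\lambda^n$. The unit factor $A$ is harmless: the Chinese Remainder splitting in Lemma~\ref{lem:Prod Form} and the Weil-bound step in Lemma~\ref{lem:Char fgu p} both go through verbatim with $f(\lambda^n)$ replaced by $f(A\lambda^n)$, since $f(AX)$ (equivalently $f(AX^s)$ after the substitution used in the proof of Lemma~\ref{lem:Char fgu p}) still has no multiple roots and nonzero constant term $f(0)$, which is all that~\cite[Theorem~11.23]{IwKow} requires. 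Granting $W(a)\ll(\ell p)^{1/2}$ and using the elementary bound $\bigl|\sum_{m=1}^{K}\e(am/T)\bigr|\le\min\{K,\tfrac12\|a/T\|^{-1}\}$, one gets $\sum_{a\bmod T}\bigl|\sum_{m=1}^{K}\e(am/T)\bigr|\ll K+T\log T$, so that
\[
\Bigl|\sum_{n=1}^{K}\chi(n)\Bigr|\ll\frac{(\ell p)^{1/2}}{T}\bigl(K+T\log T\bigr)=\frac{K(\ell p)^{1/2}}{\tau_{\ell p}}+(\ell p)^{1/2}\log T ,
\]
and the claimed estimate follows because $T=\tau_{\ell p}$ divides $(\ell-1)(p-1)$, whence $\log T\le\log(\ell p)$.

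I do not expect a serious obstacle here; the argument is a routine completion. The two points that need a little care are the validity of the completion identity for all $K$ (and not merely for $K\le\tau_{\ell p}$, where one could instead split into full periods plus a short tail) and the mild extension of the complete-sum estimate from $f(\lambda^n)$ to $f(A\lambda^n)$ described above. Conceptually, the entire gain over the trivial bound $K$ comes from the single frequency $a=0$: Lemma~\ref{lem:Char fgu pl} shows that even the full-period sum $W(0)=\sum_{n=1}^{\tau_{\ell p}}\left(\frac{f(A\lambda^n)}{\ell p}\right)$ is only $O\bigl((\ell p)^{1/2}\bigr)$ rather than the trivial $\tau_{\ell p}$, and this is precisely what turns the contribution of $a=0$ into the main term $K(\ell p)^{1/2}/\tau_{\ell p}$; the remaining frequencies produce the $(\ell p)^{1/2}\log(\ell p)$ error through the familiar logarithmic sum of the incomplete geometric sums.
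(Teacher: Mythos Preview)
Your proposal is correct and is precisely the approach the paper indicates: the paper omits the proof, saying only that it ``uses Lemma~\ref{lem:Char fgu pl} in conjunction with the standard technique of deriving bounds on incomplete sums from bounds on complete sums; see, for example,~\cite[\S12.2]{IwKow}.'' Your write-up supplies exactly these details, including the mild extension of Lemma~\ref{lem:Char fgu pl} to accommodate the factor $A$.
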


The proof of Lemma~\ref{lem:Char fgu pl N}
(which we omit) uses Lemma~\ref{lem:Char fgu pl} in conjunction
with the standard technique of deriving bounds on incomplete sums from
bounds on complete sums; see,  for example,~\cite[\S12.2]{IwKow}. 

\subsection{Sums with real characters}

\label{sec:RealChar}

We need the following  bound for character sums ``on
average'' over squarefree moduli, which is due to
Heath-Brown; see~\cite[Corollary~3]{HB2}.

\begin{lemma}
\label{lem:HB} For positive integers $R,S$ and 
a function $\psi:\RR_{>0}\to\CC$ we have
$$
\sum_{\substack{m\le R\\m\text{~odd sqfree}}}\left|
\sum_{s\le S}\psi(s)\(\frac{s}{m}\)\right|^2\le
S(R+S)(RS)^{o(1)}\max_{1\le s\le S}~\left|\psi(s)\right|^2
$$
as $\max\{R,S\}\to\infty$, where the function of $R,S$ implied
by $o(1)$ depends only on $\psi$.
\end{lemma}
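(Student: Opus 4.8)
This inequality is the \emph{quadratic large sieve} for the Jacobi symbol, a theorem of Heath-Brown; the plan is to sketch a proof along the lines of his argument. Two preliminary remarks fix the difficulty. First, the ordinary multiplicative large sieve is useless here: since $m\mapsto\left(\frac{s}{m}\right)$ is not a primitive character modulo $m$ when $m$ is composite, summing over all $m\le R$ rather than over primitive characters would produce a bound of the shape $R^2+S$ in place of the sharp $R+S$. Second, pulling the factor $\max_{s\le S}|\psi(s)|^2$ outside, it suffices to treat $|\psi(s)|\le 1$. I would then reduce to the case where $\psi$ is supported on odd squarefree $s$: writing $s=2^j s_0 t^2$ with $s_0$ odd squarefree, the identity $\left(\frac{s}{m}\right)=\left(\frac{2}{m}\right)^j\left(\frac{s_0}{m}\right)$ whenever $\gcd(m,t)=1$ (the symbol vanishing otherwise), together with the M\"obius expansion of the condition $\gcd(m,t)=1$, a Cauchy--Schwarz step costing only a divisor factor of size $m^{o(1)}\le R^{o(1)}$, and a splitting of $m$ into residue classes modulo $8$, lets one pass to finitely many sums of the required special form at the cost of an overall factor $(RS)^{o(1)}$.

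So the core task is
$$
\sum_{\substack{m\le R\\m\text{ odd sqfree}}}\Bigl|\sum_{\substack{s\le S\\s\text{ odd sqfree}}}\psi(s)\left(\frac{s}{m}\right)\Bigr|^2\ll (RS)^{o(1)}(R+S)\sum_{s}|\psi(s)|^2.
$$
To attack it I would insert a smooth majorant $w(m/R)$ for the indicator of $m\le R$, expand the square, and interchange summations to reach
$$
\sum_{s_1,s_2}\psi(s_1)\overline{\psi(s_2)}\sum_{m}w(m/R)\left(\frac{s_1s_2}{m}\right).
$$
For fixed $k=s_1s_2$ the inner sum is $\sum_m w(m/R)\left(\frac{k}{m}\right)$; by quadratic reciprocity $\left(\frac{k}{m}\right)$ equals $\left(\frac{m}{k}\right)$ times a factor depending only on $k$ and $m\bmod 4$, so as a function of $m$ it is a twist of the real character $\chi_k=\left(\frac{\cdot}{k}\right)$. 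This character is principal exactly when $k$ is a perfect square, i.e.\ (both $s_i$ being squarefree) essentially when $s_1=s_2$; the resulting diagonal terms produce the main term $\asymp R\sum_s|\psi(s)|^2$. For the off-diagonal terms (those with $k$ not a perfect square) I would split $m$ into residue classes modulo a suitable multiple of $4k$ and apply Poisson summation, turning the $m$-sum of length $R$ into a dual sum of length $O(k/R)=O(S^2/R)$ weighted by Gauss sums of $\chi_k$. For the (nearly squarefree) moduli that occur these Gauss sums have size $O(\sqrt{k})$ and are themselves, up to controlled factors, equal to $\left(\frac{h}{k}\right)\sqrt{k}$ — another Jacobi symbol, which reciprocity converts back into $\left(\frac{s_i}{h}\right)$.

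The outcome is a self-referential estimate: after a Cauchy--Schwarz rearrangement the off-diagonal contribution is bounded by an expression of precisely the same form as the left-hand side, but with the roles of the modulus and the coefficient variables interchanged, with the modulus now ranging up to $O(S^2/R)$, and with an extra factor $\sqrt{k}/k$ from the Gauss sums improving the $s$-weights. This is exactly the setup for a recursive argument in the style of Heath-Brown: iterating the transformation drives the parameters, after $O(\log RS)$ steps each losing only $(RS)^{o(1)}$, into a range where the bound is elementary, and one assembles the pieces to recover the clean shape $(R+S)$ for the total.

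The step I expect to be the main obstacle is precisely this recursion, for two reasons. First, the Poisson/Gauss-sum analysis must be carried out honestly when $k=s_1s_2$ fails to be squarefree; here the squarefreeness of $s_1$ and $s_2$ individually forces the squarefull part of $k$ to divide $\gcd(s_1,s_2)$, so the Gauss sums and conductors degrade only by a factor one can absorb, but tracking this — together with the $\bmod 4$ and $\bmod 8$ twists from reciprocity and the archimedean weights from $w$ — is where the bookkeeping is heaviest. Second, one must confirm that the many small losses incurred along the recursion genuinely multiply up to $(RS)^{o(1)}$ rather than to a fixed power of $RS$. Once the core inequality is in hand, undoing the reductions of the first paragraph costs only a further $(RS)^{o(1)}$, which yields the stated bound.
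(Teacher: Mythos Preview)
The paper does not prove this lemma at all: it is stated as a direct quotation of Heath-Brown's result \cite[Corollary~3]{HB2} and no argument is given. Your sketch is essentially an outline of Heath-Brown's own proof of the quadratic large sieve (smooth weights, Poisson summation, quadratic reciprocity on the Gauss sums, and the self-referential recursion), so you are not diverging from the paper's approach but rather filling in what the paper leaves as a citation; for the purposes of this paper a bare reference to \cite{HB2} suffices.
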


\subsection{Character sums under the ERH} 

Under the ERH we have  
the following well-known estimate (see~\cite[\S1]{MoVau}; it
can also be derived from~\cite[Theorem~2]{GrSo}).

\begin{lemma}
\label{lem:ERH} For integers $q>k\ge 1$ and a primitive 
character $\chi$ modulo $q$, the bound
$$
\left|\sum_{n\le k}\chi(n)\right|\le k^{1/2} q^{o(1)}
$$
holds, where the function implied by $o(1)$ depends only on $k$.
\end{lemma}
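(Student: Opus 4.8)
The plan is to express $\sum_{n\le k}\chi(n)$ as a contour integral involving the Dirichlet $L$-function $L(s,\chi)$ and then to move the contour onto the critical line $\Re s=\tfrac12$, where the ERH forces both the holomorphy of $L(s,\chi)$ (since $q>k\ge1$ gives $q\ge2$, so the primitive character $\chi$ is non-principal and $L(s,\chi)$ is entire) and a Lindel\"of-type size bound. Throughout I would write $x=k+\tfrac12$, so that $\sum_{n\le k}\chi(n)=\sum_{n\le x}\chi(n)$; taking $x$ to be a half-integer guarantees $\bigl|\log(x/n)\bigr|\gg 1/x$ for every positive integer $n$, which streamlines the error term in the Perron step.

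First I would invoke the truncated Perron formula with $c=1+1/\log x$ and a truncation height $T\ge 2$ to be fixed later:
$$
\sum_{n\le x}\chi(n)=\frac{1}{2\pi i}\int_{c-iT}^{c+iT}L(s,\chi)\,\frac{x^s}{s}\,ds+O\!\left(\frac{x\log x}{T}\right),
$$
the error coming from the standard estimate for $x^c\sum_n|\chi(n)|n^{-c}\min\bigl(1,(T|\log(x/n)|)^{-1}\bigr)$ together with $|L(c,\chi)|\le\zeta(c)\ll\log x$, $x^c\asymp x$ and $|\log(x/n)|\gg 1/x$. I would then shift the line of integration from $\Re s=c$ to $\Re s=\tfrac12$. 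No residue is picked up: $L(s,\chi)$ has no pole (as $\chi$ is non-principal) and, under the ERH, no zeros with $\Re s>\tfrac12$, while $1/s$ is regular in the rectangle $\{\tfrac12\le\Re s\le c,\ |\Im s|\le T\}$. This leaves the vertical integral over $\Re s=\tfrac12$ together with the two horizontal segments $\{\sigma\pm iT:\tfrac12\le\sigma\le c\}$.

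The essential analytic input is the classical fact that the ERH implies Littlewood's bound
$$
|L(s,\chi)|\le\exp\!\left(\frac{c_0\log\bigl(q(2+|\Im s|)\bigr)}{\log\log\bigl(q(2+|\Im s|)\bigr)}\right)=\bigl(q(2+|\Im s|)\bigr)^{o(1)}
$$
uniformly for $\Re s\ge\tfrac12$, with an absolute constant $c_0$; this is obtained by feeding the ERH into the explicit formula for $L'/L(s,\chi)$ (see~\cite{MoVau}). Granting it, the horizontal segments contribute $\ll(x/T)(qT)^{o(1)}$, while the vertical segment contributes
$$
\ll x^{1/2}\int_{-T}^{T}\frac{|L(\tfrac12+it,\chi)|}{1+|t|}\,dt\ll x^{1/2}(qT)^{o(1)}\log(2T)\ll x^{1/2}(qT)^{o(1)}.
$$
Choosing $T=q$ is legitimate since $x=k+\tfrac12<q$, so that $(x\log x)/q\ll\log q=q^{o(1)}$ and $(qT)^{o(1)}=q^{o(1)}$; then every contribution, including the Perron error, is $\ll x^{1/2}q^{o(1)}$. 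Since $x^{1/2}\ll k^{1/2}$, this yields $\bigl|\sum_{n\le k}\chi(n)\bigr|\ll k^{1/2}q^{o(1)}$, and absorbing the implied constant into $q^{o(1)}$ gives the asserted inequality with an $o(1)$ depending only on $k$.

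The one non-routine ingredient, and the place where the ERH is genuinely used, is Littlewood's critical-line bound for $|L(s,\chi)|$; the Perron truncation and contour shift are entirely standard bookkeeping. A variant that sidesteps an explicit appeal to Littlewood's bound is to start from a smoothly weighted sum $\sum_n\chi(n)\Phi(n/x)$, whose Mellin transform decays rapidly, shift its integral onto a fixed line $\Re s=\tfrac12+\varepsilon$ (using only the easier ERH estimate $L(\tfrac12+\varepsilon+it,\chi)\ll_\varepsilon(q(2+|t|))^{o(1)}$ there), and then pass back to the sharp cutoff $n\le k$; the latter step must be handled with care, since a naive comparison reintroduces a short character sum over the transition region that one cannot control directly. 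One may alternatively follow Granville--Soundararajan~\cite{GrSo} and route the estimate through a mean value of $\chi$ over the primes, which the ERH again governs, but the contour argument above appears to be the most direct route to the stated bound.
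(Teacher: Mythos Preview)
The paper does not supply a proof of this lemma at all: it simply records the bound as well known, pointing to \cite[\S1]{MoVau} and \cite[Theorem~2]{GrSo}. So there is no ``paper's own proof'' to compare against; you have filled in an argument where the authors chose to cite the literature.

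Your argument is the standard one and is correct. One small remark: in justifying the contour shift you write that under the ERH $L(s,\chi)$ has no zeros with $\Re s>\tfrac12$, but this is irrelevant to the Cauchy step---what matters for picking up no residue is that the integrand $L(s,\chi)x^s/s$ has no \emph{poles} in the rectangle, which holds simply because $\chi$ is non-principal (so $L(s,\chi)$ is entire) and $s=0$ lies outside the rectangle. The ERH enters only through the size bound $|L(s,\chi)|\le (q(2+|\Im s|))^{o(1)}$ on $\Re s\ge\tfrac12$, exactly as you use it afterwards. With that cosmetic adjustment, the Perron truncation, the horizontal and vertical estimates, and the choice $T=q$ all go through, and the final bound $k^{1/2}q^{o(1)}$ follows.
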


\section{The proofs}
\label{sec:proofs}

\subsection{Proof of Theorem~\ref{thm:QsMN}}

Initially, we follow the proof of~\cite[Theorem~1.3]{LuSh2}.  

For every $\alpha$ satisfying~\eqref{eq:good alpha}, there are 
constants $c>0$ and $C>1$ depending only on $\alpha$ with the following property.
For every sufficiently large real number $z>1$, there is a set $\sL_z$
containing at least $cz/\log z$ primes $\ell\in[z,Cz]$ for which
$$
\tau_\ell(g) \ge P^+(\ell-1) \ge z^\alpha\qquad(\ell\in \sL_z);
$$
see~\cite[Lemma~5.1]{LuSh2}.  Let $\alpha,c,C$ be fixed in what follows;
we can assume that $\alpha>\tfrac12$, as even the 
value~\eqref{eq:BH bound} is admissible.  We also assume that $z$ is large
enough so that the aforementioned property holds.

Let $\omega_z(k)$ be the number of distinct prime factors $\ell$ of
$k$ that lie in $\sL_z$.  Note that if $k\ge 1$ is a perfect square, then we always have
$$
\sum_{\ell \in \sL_z}\(\frac{k}{\ell}\)=|\sL_z|-\omega_z(k).
$$
Let $\sN_z$ be the set of integers $n \in [M+1, M+N]$ such that
$\omega_z(u(n))\le \tfrac12|\sL_z|$, and let $\sE_z$ be the set of
remaining integers $n \in [M+1, M+N]$. 
The following bound is~\cite[Equation~(6.1)]{LuSh2}:
\begin{equation}
\label{eq:Ez} |\sE_z| \ll N z^{-\alpha} +  \log z .
\end{equation}
For a fixed squarefree $s\ge 1$, let $\sN_{s,z}$ denote the set of
$n\in\sN_z$ for which $\QQ\(\sqrt{u(n)}\,\)=\QQ(\sqrt{s}\,)$.
For such $n$, it is clear that  
$su(n)$ is a perfect square, hence $s\mid u(n)$,
and $\omega_z(su(n)) = \omega_z(u(n))\le\tfrac12|\sL_z|$. 
Thus, for every $n\in\sN_{s,z}$ we have
$$
\sum_{\ell \in \sL_z} \(\frac{su(n)}{\ell}\) =|\sL_z| -
\omega_z(su(n)) \ge\tfrac12|\sL_z|.
$$
This implies that
\begin{equation}
\label{eq:nNsz} 
|\sN_{s,z}| \le \frac{2}{|\sL_z|}
\sum_{n \in \sN_{s,z}}
\left|   \sum_{\ell \in \sL_z} \(\frac{su(n)}{\ell}\)\right|^2.
\end{equation}
Using~\eqref{eq:Ez} and extending the summation in~\eqref{eq:nNsz} 
to all  $n \in [M+1, M+N]$ we deduce that
\begin{equation}
\begin{split}
\label{eq:basicQ} 
Q_u(M,N;s)  &  \ll   Nz^{-\alpha}+\log z+\frac{(\log z)^2}{z^2} \sum_{n = M+1}^{M+N} 
\left|\sum_{\ell \in \sL_z}\(\frac{su(n)}{\ell}\)\right|^2 . 
\end{split}
\end{equation}
Squaring out the right side and estimating the contribution from diagonal
terms $\ell=p$ trivially as $N$, we have 
\begin{equation*}
\begin{split}
 \sum_{n = M+1}^{M+N} 
\left|   \sum_{\ell \in \sL_z} \(\frac{su(n)}{\ell}\)\right|^2
=  \sum_{\ell, p\in \sL_z} \sum_{n =
M+1}^{M+N} \(\frac{su(n)}{\ell p}\) \ll N |\sL_z| + W,
\end{split}
\end{equation*}
where 
$$
W = \sum_{\substack{\ell, p\in \sL_z\\\ell \ne p} } \sum_{n =
M+1}^{M+N} \(\frac{su(n)}{\ell p}\).
$$
Since $\alpha<1$, the contribution to~\eqref{eq:basicQ} coming
from diagonal terms, namely, 
$$ 
N |\sL_z|\,\frac{(\log z)^2}{z^2} \ll N z^{-1} \log z,
$$
is dominated by the term $Nz^{-\alpha}$ and so can be dropped. This yields 
the bound
\begin{equation}
\label{eq:newQ} 
Q_u(M,N;s)    \ll   Nz^{-\alpha}+\log z  +
\frac{(\log z)^2}{z^2}W,
\end{equation}
which is a slightly simplified version of ~\cite[Equation~(6.4)]{LuSh2}.

Turning to the estimation of $W$, we now write $W=U+V$, where
\begin{equation*}
\begin{split}
U &= \sum_{\substack{\ell, p\in \sL_z,~\ell \ne p\\ P^+(\ell-1) = P^+(p-1)}}
\sum_{n =
M+1}^{M+N} \(\frac{su(n)}{\ell p}\), \\
V &= \sum_{\substack{\ell,p\in\sL_z\\P^+(\ell-1)\ne P^+(p-1)}} \sum_{n =
M+1}^{M+N} \(\frac{su(n)}{\ell p}\).
\end{split}
\end{equation*}
To estimate $U$, we use the trivial bound $N$ on each inner sum, deriving that
\begin{equation}
\label{eq:bound U}
U \le  N \sum_{\substack{\ell, p\in \sL_z\\ P^+(\ell-1)  = P^+(p-1)}} 1
\le N \sum_{r\ge z^\alpha}
 \sum_{\substack{\ell, p\in \sL_z\\ \ell \equiv p\equiv 1 \bmod r}} 1
 \ll N \sum_{r\ge z^\alpha}\frac{z^2}{r^2}
 \ll Nz^{2 -\alpha}. 
\end{equation}
To estimate $V$, we first observe that the inequality
$\tau_p(g)\ge P^+(p -1)\ge p^\alpha$  implies (since $\alpha>\tfrac12$)
that $P^+(p-1)\mid \tau_p(g)$ for every $p \in \sL_z$.  

Fix a pair  $(\ell, p) \in \sL_z{\times}\sL_z$ 
with $P^+(\ell-1) \ne P^+(p-1)$, and put
$$
h=\gcd\(\tau_{\ell}(g), \tau_p(g)\)\mand\lambda=g^h.
$$
It is easy to check that $\lambda$ satisfies the conditions of 
Lemma~\ref{lem:Char fgu pl N} with 
$$
\tau_\ell(\lambda)=\tau_\ell(g)/h\ge P^+(\ell-1)\mand
\tau_p(\lambda)=\tau_p(g)/h\ge P^+(p-1).
$$
Furthermore, taking $K = \fl{N/h}$ we obtain that
\begin{equation*}
\begin{split}
\sum_{n = M+1}^{M+N}  \(\frac{su(n)}{\ell p}\) &  = 
 \(\frac{s}{\ell p}\) \sum_{j=1}^h \sum_{k=1}^K \(\frac{u(M + kh +j)}{\ell p}\) + O(h)\\
 &  =  \(\frac{s}{\ell p}\) \
\sum_{j=1}^h \sum_{k=1}^K  \(\frac{f(g^{M +j} \lambda^k)}{\ell p}\) + O(h).
\end{split}
\end{equation*}
Applying Lemma~\ref{lem:Char fgu pl N},  we derive that
\begin{equation}
\label{eq:Prelim} 
\begin{split}
\sum_{n = M+1}^{M+N}  \(\frac{su(n)}{\ell p}\) &  \ll
h \(\frac{K(\ell   p)^{1/2}}{\tau_{\ell p}(\lambda)}+ (\ell   p)^{1/2} \log  (\ell   p) \)  + h\\
&  \ll
\frac{N(\ell   p)^{1/2}}{\tau_{\ell p}(\lambda)}+ h (\ell   p)^{1/2} \log  (\ell   p).
\end{split}
\end{equation}
Hence
\begin{align*}
\sum_{n = M+1}^{M+N}  \(\frac{su(n)}{\ell p}\) 
&  \ll
\frac{N(\ell   p)^{1/2}}{P^+(\ell-1)  P^+(p-1)}+ h (\ell   p)^{1/2} \log  (\ell   p)\\
& \ll  N z^{1- 2 \alpha} + h z \log z.
\end{align*}
Since 
$$
h = \gcd(\tau_{\ell}(g), \tau_p(g))\le  \gcd(\ell-1,p-1), 
$$
we have therefore shown that
\begin{equation}
\label{eq:Bound h}
\sum_{n = M+1}^{M+N}  \(\frac{su(n)}{\ell p}\) \ll  N z^{1- 2 \alpha} + \gcd({\ell}-1, p-1)
 z \log z.
\end{equation}

Combining the bound~\eqref{eq:Bound h} with the definition of $V$, we have
$$
V  \ll N z^{3- 2 \alpha} (\log z)^{-2} +  T  z\log z,
$$
where
$$
T=\sum_{\substack{\ell,p\in\sL_z\\P^+(\ell-1)\ne P^+(p-1)}}\gcd(\ell-1,p-1).
$$
For each pair $(\ell,p)$ in this sum, the primes $\ell_0=P^+(\ell-1)$ and $p_0=P^+(p-1)$
are distinct and satisfy $\min\{\ell_0,p_0\}\ge z^\alpha$. Writing
$\ell-1=\ell_0m_0$ and $p-1=p_0n_0$, it follows that
\begin{equation}
\label{eq:gcd-bd}
\gcd(\ell-1,p-1)\le\min\{m_0,n_0\}\le Cz^{1-\alpha}.
\end{equation}
Thus, using Lemmas~\ref{lem:B-T} and~\ref{lem:Euler recipr} we have
\begin{equation}
\label{eq:T}
T\ll\sum_{m\le Cz^{1-\alpha}}
m \sum_{\substack{p,\ell\in \sL_z\\ \ell\equiv 1\bmod m\\ p\equiv 1\bmod m}}1
\ll  \sum_{m\le Cz^{1-\alpha}} m\(\frac{z}{\varphi(m)\log z}\)^2  \ll z^2(\log z)^{-1}.
\end{equation}
Thus,
\begin{equation}
\label{eq:bound V}
V\ll N z^{3- 2 \alpha} (\log z)^{-2} +  z^3.
\end{equation}

Comparing~\eqref{eq:bound U} and~\eqref{eq:bound V}, 
we see that the bound for $V$ always dominates (since $\alpha < 1$);
hence, as $W=U+V$ we have
$$
W \ll  N z^{3- 2 \alpha} (\log z)^{-2} +  z^3. 
$$
Inserting this bound into~\eqref{eq:newQ} and removing negligible terms,
it follows that
$$
    Q_u(M,N;s)\ll Nz^{1-2\alpha} +z (\log z)^2.
$$
Choosing $z=N^{1/(2\alpha)}(\log N)^{-1/\alpha}$ we obtain the stated result.

\subsection{Proof of Theorem~\ref{thm:QsMN Av1}}

We proceed as in the proof of Theorem~\ref{thm:QsMN}. In particular, 
since the sets $\sN_{s,z}$ are disjoint,  using~\eqref{eq:Ez} and~\eqref{eq:nNsz} 
we derive the following analogue of~\eqref{eq:basicQ}:
$$
\sQ_u(M,N;S)\ll Nz^{-\alpha}+\log z
 +  \frac{(\log
z)^2}{z^2}  \sum_{\substack{s \le S\\ s\text{~sqfree}}} 
 \sum_{n = M+1}^{M+N} 
\left|\sum_{\ell \in \sL_z} \(\frac{su(n)}{\ell}\)\right|^2. 
$$
Thus, extending the summation to all integer $s\in[1,S]$, we obtain 
the following  analogue of~\eqref{eq:newQ}:
\begin{equation}
\label{eq:newQS} 
\sQ_u(M,N;S)\ll Nz^{-\alpha}+\log z+\frac{(\log z)^2}{z^2}\sW. 
\end{equation}
Here, $\sW=\sU+\sV$ with 
\begin{equation*}
\begin{split}
\sU &= \sum_{\substack{\ell, p\in \sL_z,~\ell \ne p\\ P^+(\ell-1) = P^+(p-1)}} 
~\sum_{s \le S}  \sum_{n =M+1}^{M+N} \(\frac{su(n)}{\ell p}\), \\
\sV &= \sum_{\substack{\ell, p\in \sL_z\\  P^+(\ell-1) \ne P^+(p-1)}}
~\sum_{s \le S}   \sum_{n =
M+1}^{M+N} \(\frac{su(n)}{\ell p}\). 
\end{split}
\end{equation*}
We bound $\sU$ trivially as in the proof of Theorem~\ref{thm:QsMN}: 
 \begin{equation}
\label{eq:fU}
\sU \ll  SNz^{2 -\alpha}.
\end{equation}
Next, we estimate $\sV$.   Using~\eqref{eq:Bound h} we have
\begin{equation}
\label{eq:fWUV}
\begin{split}
\sV & \le  \sum_{\substack{\ell, p\in \sL_z\\  P^+(\ell-1) \ne P^+(p-1)}} 
\left| \sum_{s \le S}   \(\frac{s}{\ell p}\)  \right|
\cdot\left| \sum_{n =
M+1}^{M+N} \(\frac{u(n)}{\ell p}\)  \right| \\
&  \ll N z^{1- 2 \alpha} \sV_1 +  z^{1+o(1)}\sV_2\qquad(z\to\infty),
\end{split}
\end{equation}
where
\begin{align*}
\sV_1 & = \sum_{\substack{\ell, p\in \sL_z\\  P^+(\ell-1) \ne P^+(p-1)}} 
\left|\sum_{s \le S}   \(\frac{s}{\ell p}\)\right|,\\
\sV_2 & = \sum_{\substack{\ell, p\in \sL_z\\  P^+(\ell-1) \ne P^+(p-1)}} 
\gcd(\ell-1,p-1)\cdot\left|\sum_{s \le S} 
\(\frac{s}{\ell p}\)\right|. 
\end{align*}

To bound $\sV_1$ we apply the Cauchy inequality and Lemma~\ref{lem:HB},
deriving that
\begin{equation}
\label{eq:fV1}
\sV_1\le\sqrt{z^{2+o(1)}\cdot S(z^2+S)(Sz^2)^{o(1)}}
=(S^{1/2}z^2+Sz)(Sz)^{o(1)}\quad(z\to\infty).
\end{equation}
For $\sV_2$ we bound the sum over $s$ trivially as $O(S)$ and 
use~\eqref{eq:T}, obtaining
\begin{equation}
\label{eq:fV2}
\sV_2  \le Sz^{2+o(1)}\qquad(z\to\infty).
\end{equation}
Inserting the bounds~\eqref{eq:fV1}  and~\eqref{eq:fV2}
into~\eqref{eq:fWUV} we have
\begin{equation}
\label{eq:fV}
\sV\le \(S^{1/2}Nz^{3-2\alpha}+SNz^{2-2\alpha}+Sz^3\)(Sz)^{o(1)}
\qquad(z\to\infty).
\end{equation}
The right side of~\eqref{eq:fU} dominates the 
second term on the right side of~\eqref{eq:fV}; hence, as $\sW=\sU+\sV$, we have
$$
\sW\le \(S^{1/2}Nz^{3-2\alpha}+SNz^{2-\alpha}+Sz^3\)(Sz)^{o(1)}
\qquad(z\to\infty).
$$
After inserting this bound in~\eqref{eq:newQS}
we derive that
\begin{equation}
\label{eq:Qu dirty1}
\sQ_u(M,N;S)\le \(Sz +
S^{1/2}Nz^{1-2\alpha}+SNz^{-\alpha}\)(Sz)^{o(1)}\qquad(z\to\infty)
\end{equation}
(here we have changed the order of terms to make~\eqref{eq:Qu dirty1} readily available 
for an application of Lemma~\ref{lem:grakol} with $J=1$ and $K=2$). 
Noting that~\eqref{eq:Qu dirty1} is  trivial  for $z \le \log N$, 
we now apply Lemma~\ref{lem:grakol} with  $z_1 = \log N$
and with a very large value of $z_2$ (for example   $z_2 = (SN)^{100}$) so 
that the single sums are always dominated by the double sum.
Since $z\to\infty$ as $N\to\infty$, this yields the bound
$$
\sQ_u(M,N;S)\le\(T_{11}+ T_{12} \)
(SN)^{o(1)}\qquad(N\to\infty),
$$
where
\begin{align*} 
T_{11} & =\(S^{2\alpha-1} (S^{1/2} N)\)^{1/(2\alpha)}
= S^{1-1/(4\alpha)}N^{1/(2\alpha)},\\
 T_{12} & = \(S^{\alpha} (SN)\)^{1/(1+\alpha)} = SN^{1/(1+\alpha)}. 
\end{align*}
We also remark that for $S > N$ the result is trivial, so we can replace $(SN)^{o(1)}$
with $N^{o(1)}$, and the proof is complete.

\subsection{Proof of Theorem~\ref{thm:QsMN Av2}}

We proceed as in the proof of Theorem~\ref{thm:QsMN Av1}, 
but we estimate the sum $\sV_2$ in a different way.  To simplify the notation, we now denote
 $g(\ell,p)=\gcd(\ell-1,p-1)$
for all $\ell,p\in\sL_z$.  As in~\eqref{eq:gcd-bd} we have the bound
$$
g(\ell,p)\le Cz^{1-\alpha}
\qquad\(\ell,p\in\sL_z,~P^+(\ell-1)\ne P^+(p-1)\).
$$
Hence, setting $J=\rf{\log(Cz^{1-\alpha})}$ we have
\begin{equation}
\label{V2 V2nu}
\sV_2\le\sum_{\nu=0}^J\sum_{\substack{\ell,p\in\sL_z,~\ell\ne p\\e^\nu<g(\ell,p)\le e^{\nu+1}}}g(\ell,p)
\left|\sum_{\substack{s\le S\\s\text{~sqfree}}}\(\frac{s}{\ell p}\)\right|
\ll\sum_{\nu=0}^J e^\nu\sV_{2,\nu},
\end{equation}
where
$$
\sV_{2,\nu}=\sum_{\substack{\ell,p \in \sL_z,~\ell\ne p \\   g(\ell,p) > e^{\nu}}} 
\left|\sum_{s \le S}   \(\frac{s}{\ell p}\)\right|.
$$
Trivially,
\begin{equation*}
\begin{split}
\sum_{\substack{\ell,p\in\sL_z\\g(\ell,p)>e^{\nu}}}1
\le\sum_{m>e^{\nu}}\sum_{\substack{\ell,p\in\sL_z\\\ell\equiv 1\bmod m\\
p\equiv 1\bmod m}}1
\ll\sum_{m>e^{\nu}}(z/m)^2\ll z^2e^{-\nu}.
\end{split}
\end{equation*}
Hence, using the Cauchy inequality and Lemma~\ref{lem:HB} we see that
$$
\sV_{2,\nu}^2
\ll   z^2 e^{-\nu} 
\sum_{\substack{\ell,p \in \sL_z \\  \ell \ne p }} 
\left|\sum_{s \le S}   \(\frac{s}{\ell p}\)\right|^2
\le z^2e^{-\nu}\cdot (Sz^2 + S^2)(Sz)^{o(1)}
\qquad(z\to\infty),
$$
and therefore
$$
\sV_{2,\nu}\le e^{-\nu/2}(S^{1/2}z^2+Sz)(Sz)^{o(1)}
\qquad(z\to\infty).
$$
Substitution in~\eqref{V2 V2nu} gives 
\begin{equation}
\label{eq:fV2nu}
\sV_2
\le e^{J/2}(S^{1/2}z^2+Sz)(Sz)^{o(1)}
\ll z^{1/2-\alpha/2}(S^{1/2}z^2+Sz)(Sz)^{o(1)}.
\end{equation}
Inserting the bounds~\eqref{eq:fV1}  and~\eqref{eq:fV2nu}
into~\eqref{eq:fWUV} we derive that
\begin{equation}
\begin{split}
\label{eq:fVnu}
\sV\le \(S^{1/2}Nz^{3-2\alpha}+SNz^{2-2\alpha}
+S^{1/2}z^{7/2-\alpha/2}+Sz^{5/2-\alpha/2}\)(Sz)^{o(1)}.
\end{split}
\end{equation}
The right side of~\eqref{eq:fU} dominates the second term  in  the bound~\eqref{eq:fVnu};
hence, recalling that $\sW=\sU+\sV$, we see that
$$
\sW\le \(S^{1/2}Nz^{3-2\alpha}+SNz^{2-\alpha}
+S^{1/2}z^{7/2-\alpha/2}+Sz^{5/2-\alpha/2}\)(Sz)^{o(1)}.
$$
Inserting this bound into~\eqref{eq:newQS} we find that
\begin{equation}
\label{eq:Qu dirty3}
\sQ_u(M,N;S)\le\(S^{1/2}z^{3/2-\alpha/2}+Sz^{1/2-\alpha/2} + SNz^{-\alpha}
+S^{1/2}Nz^{1-2\alpha}\)(Sz)^{o(1)}
\end{equation}
as $z\to\infty$ (we have rearranged terms to make~\eqref{eq:Qu dirty3} readily available 
for an application of Lemma~\ref{lem:grakol} with $J=K=2$). 

As in the proof of Theorem~\ref{thm:QsMN Av2}, we 
 note that~\eqref{eq:Qu dirty3} is  trivial  for $z \le \log N$. 
We now apply Lemma~\ref{lem:grakol} with  $z_1 = \log N$, 
and a very large value of $z_2$, say $z_2 = (SN)^{100}$, so 
that the single sums are dominated by the double sum. This gives
\begin{equation}
\label{eq:Qu clean2}
\sQ_u(M,N;S)\le\(T_{11}+ T_{12} + T_{21} + T_{22}\)
(SN)^{o(1)}\quad(N\to\infty),
\end{equation}
where
\begin{align*} 
T_{11} & =\((S^{1/2})^{\alpha} (S N)^{(3-\alpha)/2}\)^{2/(3+\alpha)} = S^{3/(3+\alpha)}N^{(3-\alpha)/(3+\alpha)},\\
T_{12} & =\((S^{1/2})^{2\alpha-1} (S^{1/2} N)^{(3-\alpha)/2}\)^{2/(1+3\alpha)} = S^{1/2}N^{(3-\alpha)/(1+3\alpha)},\\
 T_{21} & = \(S^{\alpha} (SN)^{(1-\alpha)/2}\)^{2/(1+\alpha)} = SN^{(1-\alpha)/(1+\alpha)}, \\
T_{22} & =\(S^{2\alpha-1} (S^{1/2} N)\)^{2/(3\alpha-1)} = S^{(7\alpha-3)/(6\alpha -2)}N^{(1-\alpha)/(3\alpha-1)}.
\end{align*}
Denoting
$$
\vartheta=\frac{(1-\alpha)^2(1+3\alpha)}{(1+\alpha^2)(3\alpha-1)},
$$
it is straightforward to check that $\vartheta\in(0,1)$
for any $\alpha\in(\tfrac12,1)$, and also 
$$
1-\frac{\vartheta}{2} = \frac{-3+5\alpha+3\alpha^2+3\alpha^3}{(6\alpha-2)(1+\alpha^2)}  
\ge \frac{7 \alpha - 3}{6\alpha-2}. 
$$
Thus, 
$$
T_{22}\le S^{1-\vartheta/2}
N^{(1-\alpha)/(3\alpha-1)}=
T_{12}^\vartheta T_{21}^{1-\vartheta}\le\max\{T_{12},T_{21}\}.
$$
Hence, the term $T_{22}$ can be omitted from~\eqref{eq:Qu clean2}.

Elementary calculations reveal that the inequality 
$T_{21}\ge T_{11}$ cannot hold unless $S\ge N$ (in fact, $S\ge N^{4/(1+\alpha)}$),
in which case $T_{11}\ge N^{(6-\alpha)/(3+\alpha)}\ge N$;
but then our stated bound~\eqref{eq:twoterms} is weaker
than the trivial bound $\sQ_u(M,N;S)\le N$. Consequently, we can
assume $T_{21}\le T_{11}$, and so the term $T_{21}$ can be
omitted from~\eqref{eq:Qu clean2}. Since we can also assume that
$S\le N$ we can replace $(SN)^{o(1)}$ with $N^{o(1)}$ in~\eqref{eq:Qu clean2},
and this completes the proof.

\subsection{Proof of Theorem~\ref{thm:QsMN Av ERH}}
We proceed as in the proof of Theorem~\ref{thm:QsMN Av1},
but in place of $\sL_z$ we use the set $\tsL_z$ consisting
of primes $\ell\in[z,Cz]$ for which
$$
\tau_\ell(g) \ge \ell/\log \ell \qquad \text{and}\qquad P^+(\ell-1) \ge z^\alpha. 
$$
By Corollary~\ref{cor:Mult Ord ERH}, we can assume that
$\left|\tsL_z\right|\ge cz/\log z$.
Another difference is that we estimate the sums
over $s$ directly via Lemma~\ref{lem:ERH}.  

Thus, instead of the bound~\eqref{eq:fU} we obtain 
$$
\sU \le  S^{1/2}Nz^{2 -\alpha+o(1)}\qquad(z\to\infty).
$$
Next, we estimate $\sV$. First we apply Lemma~\ref{lem:ERH} and derive 
$$
\sV  \le  S^{1/2} z^{o(1)} \sum_{\substack{\ell, p\in \tsL_z\\  P^+(\ell-1) \ne P^+(p-1)}} 
\left| \sum_{n =
M+1}^{M+N} \(\frac{u(n)}{\ell p}\)  \right|\qquad(z\to\infty).
$$
Using the equation 
$$
\tau_{\ell p}(\lambda) = \frac{\tau_{\ell}(g) \tau_p(g)}{h^2},
$$
where (as in the proof of of Theorem~\ref{thm:QsMN})
$$
h=\gcd\(\tau_{\ell}(g), \tau_p(g)\) \le \gcd\(\ell-1, p-1\),
$$
and bearing in mind our choice of $\tsL_z$, we can rewrite~\eqref{eq:Prelim} 
as 
\begin{equation*}
\begin{split}
\sum_{n = M+1}^{M+N}  \(\frac{su(n)}{\ell p}\) &  \ll
\frac{N(\ell   p)^{1/2}}{\tau_{\ell p}(\lambda)}+ h (\ell   p)^{1/2} \log  (\ell   p)\\
 &  \ll
\frac{N(\ell   p)^{1/2}h^2 }{\tau_{\ell}(g) \tau_p(g)}+ h (\ell   p)^{1/2} \log  (\ell   p)\\
&  = Nz^{-1+o(1)}  \gcd\(\ell-1, p-1\)^2 +  z^{1+o(1)} \gcd\(\ell-1, p-1\).
\end{split}
\end{equation*}
Hence, we obtain 
\begin{equation}
\label{eq:VQT}
\sV  \le  S^{1/2}  \( NQz^{-1+o(1)}    + Tz^{1+o(1)}\),
\end{equation}
 where 
\begin{align*}
Q& =  \sum_{\substack{\ell, p\in \tsL_z\\  P^+(\ell-1) \ne P^+(p-1)}} 
 \gcd\(\ell-1, p-1\)^2\\
T& =\sum_{\substack{\ell,p\in\tsL_z\\P^+(\ell-1)\ne P^+(p-1)}}\gcd(\ell-1,p-1).
\end{align*}
Clearly, we can still use~\eqref{eq:gcd-bd} with
$\ell, p\in \tsL_z$ with $P^+(\ell-1) \ne P^+(p-1)$, and we have
\eqref{eq:T} as before.  Also,
\begin{equation}
\label{eq:Q}
Q\ll\sum_{m\le Cz^{1-\alpha}}
m^2 \sum_{\substack{p,\ell\in \sL_z\\ \ell\equiv 1\bmod m\\ p\equiv 1\bmod m}}1
\ll  \sum_{m\le Cz^{1-\alpha}} m^2\(\frac{z}{m}\)^2  \ll z^{3-\alpha}.
\end{equation}
Substituting~\eqref{eq:T} and~\eqref{eq:Q} in~\eqref{eq:VQT}, we obtain that
$$
\sV \ll   S^{1/2}  \( N z^{2-\alpha+o(1)}    + z^{3+o(1)}\).
$$
Hence 
\begin{align*}
\sQ_u(M,N;S)&\ll Nz^{-\alpha}+S^{1/2}Nz^{-\alpha+o(1)}+S^{1/2}z^{1+o(1)}\\
&\ll S^{1/2}Nz^{-\alpha+o(1)}+S^{1/2}z^{1+o(1)}.
\end{align*}
Taking $z=N^{1/(1+\alpha)}$ to balance the two terms, we conclude the proof.

\end{document}